\DeclareMathOperator{\Supp}{Supp}
\newcommand{\Spec}{\operatorname{Spec}}
\newcommand{\Hom}{\operatorname{Hom{}}}
\newcommand{\Tor}{\operatorname{Tor{}}}
\newcommand{\depth}{\operatorname{depth}}
\renewcommand{\hat}{\widehat}
\renewcommand{\phi}{\varphi}
\renewcommand{\to}{{\longrightarrow}}
\newcommand{\Ass}{\operatorname{Ass}}
\newcommand{\Min}{\operatorname{Min}}
\newcommand{\hgt}{\operatorname{ht}}
\newcommand{\Ann}{\operatorname{Ann}}
\newcommand{\ck}{\operatorname{v}}
\newtheorem{thm}{Theorem}[section]
\newtheorem{cor}[thm]{Corollary}
\newtheorem{prop}[thm]{Proposition}
\newtheorem{lemma}[thm]{Lemma}
\newtheorem{defn}[thm]{Definition}
\numberwithin{equation}{section}
\begin{document}
\title{The Frobenius functor and injective modules}
\author{Thomas Marley}
\address{Department of
Mathematics\\
University of Nebraska-Lincoln\\Lincoln,  NE 68588-0130}
\email{ tmarley@math.unl.edu}

\subjclass[2000]{ Primary
13H10; Secondary 13D45 }
\keywords{Frobenius map, injective module, canonical module}

\begin{abstract}
We investigate commutative Noetherian rings of prime characteristic such that the Frobenius functor applied to any injective module is again injective.  We characterize the class of one-dimensional local rings with this property and show that it includes all one-dimensional $F$-pure rings.  We also give a characterization of Gorenstein local rings in terms of $\Tor_i^R(R^{f},E)$, where $E$ is the injective hull of the residue field and $R^{f}$ is the ring $R$ whose right $R$-module action is given by the Frobenius map.
\end{abstract}

\date{February 28, 2012}

\bibliographystyle{amsplain}

\maketitle

\begin{section}{Introduction}
\end{section}

Let $R$ be a commutative Noetherian ring of prime characteristic $p$ and $f:R\to R$ the Frobenius ring
 homomorphism (i.e., $f(r)=r^p$ for $r\in R$).   We let $R^{f}$ denote the ring $R$ with the $R-R$
 bimodule structure given by $r\cdot s:=rs$ and $s\cdot r:=sf(r)$ for $r\in R$ and $s\in R^{f}$.
 Then $F_R(-):= R^{f}\otimes_R -$ is a right exact functor on the category of (left) $R$-modules and
 is called the {\it Frobenius functor} on $R$.    This functor has played an essential role in the solution of many important problems in commutative algebra for local rings of prime characteristic  (e.g., \cite{HR}, \cite{PS}, \cite{R}).  Of particular interest is how properties of the Frobenius map (or functor) characterize classical properties of the ring.   The most important result of this type, proved by Kunz \cite{K}, says that  $F_R$ is exact if and only if $R$ is a regular ring.   As another example, Iyengar and Sather-Wagstaff prove that a local ring $R$ is Gorenstein if and only if $R^{f}$ (viewed as a right $R$-module) has finite G-dimension \cite{IS}.

 As $F_R$ is additive and $F_R(R)\cong R$,
 it is easily seen that $F_R$ preserves projective (in fact, flat) modules.  In this paper, we consider rings for
 which $F_R$ preserves injective modules, i.e., rings $R$ having the property that $F_R(I)$ is injective for
 every injective $R$-module $I$.
 It is well known (e.g., \cite{HS}) that Gorenstein rings have this property, and
 in fact this is true for quasi-Gorenstein rings as well (Proposition \ref{quasi}).  In Section 3,
 we show that if $F_R$ preserves injectives then $R$ satisfies Serre's condition $S_1$ and that
 $F_R(I)\cong I$ for every injective $R$-module $I$.   Moreover, if
 $R$ is the homomorphic image of a Gorenstein local ring, then $F_R$
 preserves all injectives if and only if $F_R(E)$ is injective,
 where $E$ is the injective hull of the residue field.
 We also give a criterion (Theorem \ref{Tor-condition}) for a local ring $R$ to be Gorenstein in terms of
 $\Tor_i^R(R^f,E)$:

\begin{thm}  Let $(R,m)$ be a local ring and $E=E_R(R/m)$.
Then the following are equivalent:
\begin{enumerate}[(a)]
\item $R$ is Gorenstein;
\item $\Tor^R_0(R^f,E)\cong E$ and $\Tor_i^R(R^{f},E)=0$ for $i=1,\dots,\depth R$.
\end{enumerate}
\end{thm}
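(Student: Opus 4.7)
The plan is to handle the two directions separately. For $(a) \Rightarrow (b)$, I would exploit the explicit presentation of $E$ as a direct limit of Koszul quotients available over Gorenstein rings, and use the fact that the Frobenius functor turns the Koszul complex on $\underline{x}^{n}$ into the Koszul complex on $\underline{x}^{np}$. For $(b) \Rightarrow (a)$, I would pass to the completion, dualize condition (b) via Matlis duality into a statement about $\Ext^{i}_{R}(R^{f}, R)$, and then invoke the Iyengar--Sather-Wagstaff characterization of Gorensteinness in terms of finite Gorenstein dimension of $R^{f}$ (cited in the introduction).

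For $(a) \Rightarrow (b)$: since $R$ is Gorenstein of dimension $d = \depth R$, it is Cohen--Macaulay and $E \cong H^{d}_{m}(R)$. Pick a system of parameters $\underline{x} = x_{1}, \dots, x_{d}$, which is an $R$-regular sequence. Then $E \cong \varinjlim_{n} R/(\underline{x}^{n})$ under the natural multiplication maps, and since $\Tor$ commutes with direct limits, it suffices to compute $\Tor^{R}_{i}(R^{f}, R/(\underline{x}^{n}))$ for each $n$. The Koszul complex $K_{\bullet}(\underline{x}^{n}; R)$ is a free resolution of $R/(\underline{x}^{n})$; applying $F_{R} = R^{f} \otimes_{R} -$ replaces each differential (multiplication by $x_{i}^{n}$) with multiplication by $x_{i}^{np}$, giving $K_{\bullet}(\underline{x}^{np}; R)$. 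Because $\underline{x}^{np}$ is again $R$-regular, this Koszul complex is acyclic with $H_{0} = R/(\underline{x}^{np})$. Thus $\Tor^{R}_{0}(R^{f}, R/(\underline{x}^{n})) \cong R/(\underline{x}^{np})$ and $\Tor^{R}_{i}(R^{f}, R/(\underline{x}^{n})) = 0$ for $i \geq 1$. Passing to the direct limit in $n$ yields $\Tor^{R}_{0}(R^{f}, E) \cong H^{d}_{m}(R) \cong E$ and $\Tor^{R}_{i}(R^{f}, E) = 0$ for all $i \geq 1$, which is in fact stronger than (b).

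For $(b) \Rightarrow (a)$: completing $R$ does not affect either condition, so assume $R$ is $m$-adically complete. The standard duality isomorphism
\[
\Hom_{R}\bigl(\Tor^{R}_{i}(R^{f}, E),\, E\bigr) \cong \Ext^{i}_{R}\bigl(R^{f}, \Hom_{R}(E,E)\bigr),
\]
together with $\Hom_{R}(E, E) \cong R$, translates (b) into $\Hom_{R}(R^{f}, R) \cong R$ plus $\Ext^{i}_{R}(R^{f}, R) = 0$ for $i = 1, \dots, \depth R$. Direct computation with the bimodule structure gives $\Hom_{R}(R^{f}, R) \cong R^{f}$ as $R$-modules and verifies that $R^{f}$ is reflexive; in particular, the Ext vanishing also holds for the $R$-dual of $R^{f}$. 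Combined with the Auslander--Bridger formula and the equality $\depth_{R}(R^{f}) = \depth R$, this should force the Gorenstein dimension of $R^{f}$ to be $0$. The Iyengar--Sather-Wagstaff theorem then implies $R$ is Gorenstein.

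\textbf{Main obstacle.} The crux is in $(b) \Rightarrow (a)$: upgrading the bounded-range Ext vanishing $\Ext^{i}_{R}(R^{f}, R) = 0$ for $1 \leq i \leq \depth R$ to a statement strong enough to conclude finite Gorenstein dimension of $R^{f}$. This likely requires either an Auslander-type rigidity for the Frobenius module or an inductive reduction via a regular element that decreases $\depth R$. A secondary but manageable technical issue arises when $R$ is not $F$-finite, so that $R^{f}$ need not be finitely generated as a right $R$-module; here one would either work in the derived category or approximate $R^{f}$ by its finitely generated submodules.
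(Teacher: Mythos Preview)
Your $(a)\Rightarrow(b)$ argument is fine and is essentially the paper's: the paper phrases it via the \v Cech complex (Lemma \ref{tor}) together with $E\cong H^d_m(R)$, but your Koszul/direct-limit computation is an equivalent packaging.

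The gap is in $(b)\Rightarrow(a)$, and it is exactly the obstacle you flag, which you have not actually overcome. After Matlis-dualizing you obtain $\Hom_R(R^f,R)\cong R$ and $\Ext^i_R(R^f,R)=0$ for $1\le i\le \depth R$. From this you want $\operatorname{G-dim}_R R^f<\infty$ so that Iyengar--Sather-Wagstaff applies. But the Auslander--Bridger formula is an \emph{output} of finite G-dimension, not an input: it says that \emph{if} $\operatorname{G-dim} M<\infty$ then $\operatorname{G-dim} M=\depth R-\depth M$. Invoking it to deduce $\operatorname{G-dim} R^f=0$ from $\depth_R R^f=\depth R$ is circular. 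Likewise, your ``direct computation'' that $\Hom_R(R^f,R)\cong R^f$ and that $R^f$ is reflexive is not correct in general; even when $R$ is $F$-finite, $\Hom_R(R^f,R)\cong R$ is a genuine condition on $R$ (this is precisely Proposition \ref{prop2}), not an automatic identity, and reflexivity of $R^f$ plus vanishing of \emph{all} $\Ext^i$ (for $R^f$ and its dual) is what $\operatorname{G-dim}=0$ means---you only have vanishing in a bounded range. Finally, the non-$F$-finite case is not a side issue here: without finite generation of $R^f$ over $R$, the entire G-dimension apparatus and the cited theorem require substantial reinterpretation.

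The paper avoids all of this by a direct, self-contained reduction. Choose a maximal regular sequence $\bold x=x_1,\dots,x_r$ and dualize the Koszul resolution $K(\bold x)\to R/(\bold x)$ into $E$ to obtain an exact complex $0\to E_{R/(\bold x)}(R/m)\to K(\bold x)^{\ck}$ whose terms are direct sums of copies of $E$. The hypothesis $\Tor_i^R(R^f,E)=0$ for $1\le i\le r$ lets $F_R$ preserve this exactness, and together with $F_R(E)\cong E$ one reads off $F_R(E_{R/(\bold x)}(R/m))\cong E_{R/(\bold x)^{[p]}}(R/m)$. A short computation (Lemma \ref{regular}) then gives $F_{R/(\bold x)}(E_{R/(\bold x)}(R/m))\cong E_{R/(\bold x)}(R/m)$, so $R/(\bold x)$ is weakly FPI. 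Since $\depth R/(\bold x)=0$, Proposition \ref{S_1} forces $\dim R/(\bold x)=0$, and then Proposition \ref{fpi}(b) gives that $R/(\bold x)$, hence $R$, is Gorenstein. The key idea you are missing is this passage to $R/(\bold x)$: the Tor-vanishing hypothesis is exactly what is needed to push the FPI property down a maximal regular sequence, where the depth-zero theory finishes the job.
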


In Section 4, we study one-dimensional rings $R$ such that $F_R$ preserves injectives.  In particular, we give the following characterization (Theorem \ref{one-dim}) in the case $R$ is local:

\begin{thm}  Let $(R,m)$ be a one-dimensional local ring and $E=E_R(R/m)$.  The following conditions are equivalent:
\begin{enumerate}[(a)]
\item $F_R(E)$ is injective;
\item $F_R(I)\cong I$ for all injective $R$-modules $I$;
\item $R$ is Cohen-Macaulay and has a canonical ideal $\omega_R$ such that $\omega_R\cong \omega_R^{[p]}$.
\end{enumerate}
\end{thm}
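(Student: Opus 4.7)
The strategy is to establish the cycle $(b)\Rightarrow(a)\Rightarrow(c)\Rightarrow(b)$. The implication $(b)\Rightarrow(a)$ is immediate by taking $I=E$; the other directions are centered on comparing $F_R(E)$ with the top local cohomology of a canonical ideal.

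Under $(a)$, the results of Section~3 (applied after passing to $\hat R$ to ensure a presentation as a quotient of a regular local ring) force $R$ to satisfy Serre's condition $S_1$; since $\dim R=1$ this means $R$ is Cohen--Macaulay. For $(a)\Rightarrow(c)$ I additionally need a canonical ideal $\omega_R\subset R$, which I would produce by establishing generic Gorensteinness of $R$ via Theorem~\ref{Tor-condition} applied to the Artinian local rings $R_\p$ at minimal primes $\p$, with the required Tor-vanishing propagated from the Frobenius hypothesis on $R$ by flat base change.

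Given $\omega_R\subset R$, the central computation tensors the exact sequence $0\to\omega_R\to R\to R/\omega_R\to 0$ with $R^f$ to obtain
\[
0\to\Tor_1^R(R^f,R/\omega_R)\to F_R(\omega_R)\to\omega_R^{[p]}\to 0,
\]
whose leftmost term has finite length. A direct \v{C}ech computation based on $E\cong H^1_m(\omega_R)$ then yields the key identification
\[
F_R(E)\cong H^1_m(\omega_R^{[p]}).
\]
Under $(a)$, the right side is an $m$-torsion Artinian injective, hence isomorphic to $E^n$. Matlis-dualizing (over $\hat R$) and applying local duality identifies the dual with $\Hom_R(\omega_R^{[p]},\omega_R)$, which must then be $\hat R$-free; rank considerations force $n=1$ and $\Hom_R(\omega_R^{[p]},\omega_R)\cong\hat R$, and the MCM self-duality $M\mapsto\Hom_R(M,\omega_R)$ on a one-dimensional CM local ring with canonical ideal converts this into $\omega_R^{[p]}\cong\omega_R$, descending to $R$. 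Conversely, under $(c)$, the chain reverses to give $F_R(E)\cong H^1_m(\omega_R^{[p]})\cong H^1_m(\omega_R)=E$, establishing $(a)$. The existence of $\omega_R$ makes $R$ a homomorphic image of a Gorenstein local ring (Reiten's theorem), so the Section~3 upgrade promotes $F_R(E)\cong E$ to $F_R(I)\cong I$ for every injective $I$, giving $(b)$.

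The main obstacle is the bidirectional translation between the Frobenius isomorphism $F_R(E)\cong E$ and the module-theoretic condition $\omega_R\cong\omega_R^{[p]}$, resting on the identification $F_R(E)\cong H^1_m(\omega_R^{[p]})$ and on MCM/local duality. A secondary but essential technical point is producing the canonical ideal in $(a)\Rightarrow(c)$: generic Gorensteinness of $R$ must be derived from the Frobenius hypothesis, which is handled by applying Theorem~\ref{Tor-condition} at minimal primes.
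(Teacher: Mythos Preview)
Your overall architecture matches the paper's: the cycle $(b)\Rightarrow(a)\Rightarrow(c)\Rightarrow(b)$, the identification $F_R(E)\cong H^1_m(\omega_R^{[p]})$ via Proposition~\ref{frob}(e) and the finite-length kernel of $F_R(\omega_R)\twoheadrightarrow\omega_R^{[p]}$, and the upgrade from weakly FPI to FPI via Theorem~\ref{fpi=wfpi} once a canonical module (hence a Gorenstein presentation) is in hand. For $(c)\Rightarrow(b)$ your argument is essentially identical to the paper's. For the passage from $F_R(E)\cong E$ to $\omega_R\cong\omega_R^{[p]}$ you take a different but legitimate route: local duality gives $\Hom_{\hat R}(\omega_{\hat R}^{[p]},\omega_{\hat R})\cong \hat R$, and canonical duality on MCM modules then returns $\omega_{\hat R}^{[p]}\cong\omega_{\hat R}$. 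The paper instead reads off directly that $R/\omega_R^{[p]}$ has one-dimensional socle (it embeds in $E$), hence is Gorenstein of dimension zero, and concludes from the resulting short exact sequence. Your route is slicker if one is comfortable invoking canonical duality; the paper's is more self-contained.

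There is, however, a genuine gap in your production of the canonical ideal in $(a)\Rightarrow(c)$. You propose to obtain generic Gorensteinness by applying Theorem~\ref{Tor-condition} to $R_\p$ for minimal primes $\p$, ``with the required Tor-vanishing propagated from the Frobenius hypothesis on $R$ by flat base change.'' But for an Artinian $R_\p$ the Tor-vanishing is vacuous; what Theorem~\ref{Tor-condition} actually demands there is $F_{R_\p}(E_{R_\p}(k(\p)))\cong E_{R_\p}(k(\p))$, and this cannot be extracted from injectivity of $F_R(E_R(R/m))$ by localization, since $E_R(R/m)_\p=0$ for $\p\neq m$. Moreover, existence of a canonical ideal requires $\hat R$ (not merely $R$) to be generically Gorenstein. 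The paper closes this gap by passing to $\hat R$, which is automatically a quotient of a regular local ring, invoking Theorem~\ref{fpi=wfpi} to promote weakly FPI to FPI on $\hat R$, and then applying Proposition~\ref{fpi}(b) to conclude $\hat R$ is generically Gorenstein. You should replace your Theorem~\ref{Tor-condition} maneuver with this step; once the canonical ideal is secured, the rest of your argument goes through.
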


\noindent Using this characterization, we show that every
one-dimensional $F$-pure ring preserves injectives. We also prove,
using a result of Goto \cite{G}, that if $R$ is a complete
one-dimensional local ring with algebraically closed residue field
and has at most two associated primes then $R$ is Gorenstein if and
only if $F_R$ preserves injectives. We remark that Theorems 1.1 and
1.2 are dual to results appearing in \cite{G} in the case the
Frobenius map is finite.  This duality is made explicit in
Proposition \ref{prop2}.

In Section 2, we summarize several results concerning the Frobenius functor and canonical modules which will be needed in the later sections.  Most of these are well-known, but for some we could not find a reference in the literature.

\medskip
\noindent {\bf Acknowledgment:}  The author would like to thank Sri
Iyengar for showing him a proof of Corollary \ref{cor1}.  He also
grateful to Neil Epstein, Louiza Fouli, Craig Huneke,  and Claudia
Miller for helpful conversations concerning this material.

\begin{section}{Some properties of the Frobenius functor and canonical modules}
\end{section}

Throughout this paper $R$ denotes a commutative Noetherian
ring of prime characteristic $p$.     For an $R$-module $M$, $E_R(M)$ will denote the injective hull of $M$.  If $I$ is an ideal of $R$ then $H^i_I(M)$ will denote the $i$th local cohomology module of $M$ with support in $I$.   If $R$ is local with maximal ideal $m$, we denote the $m$-adic completion of $R$ by $\hat R$. We refer the reader to \cite{BH} or \cite{Mat} for any unexplained terminology or notation.

Let $M$ be a finitely generated $R$-module with presentation
$R^r\xrightarrow{\phi} R^s\to M\to 0$, where $\phi$ is represented (after fixing bases) by an $s\times r$ matrix $A$.   Then $F_R(M)$ has the presentation $R^r\xrightarrow{F_R(\phi)}R^s\to F_R(M)\to 0$
and the map $F_R(\phi)$ is represented by the matrix $A^{[p]}$ obtained by raising the corresponding entries of $A$ to the $p$th power.  For an ideal $I$ of $R$ and $q=p^e$, we let $I^{[q]}$ denote the ideal generated by the set $\{i^q\mid i\in I\}$.   Note that, by above,  $F^e_R(R/I)\cong R/I^{[q]}$, where $F^e_R$ is the functor $F_R$ iterated $e$ times.

The following proposition lists  a few properties of the Frobenius functor which we will use in the sequel.
Most of these are well known:

\begin{prop} \label{frob} The following hold for any $R$-module $M$:
\begin{enumerate}[(a)]
\item If $T$ is an $R$-algebra then $F_T(T\otimes_RM)\cong T\otimes_R F_R(M)$.
\item If $S$ is a multiplicatively closed set of $R$ then $F_R(M_S)\cong F_{R_S}(M_S)\cong R_S\otimes_R F_R(M)$.
\item $\Supp_R F_R(M)=\Supp_R M$.
\item $M$ is Artinian if and only if $F_R(M)$ is Artinian.
\item If $(R,m)$ is local, and $M$ is finitely generated of dimension $s$, then
$F_R(H^s_m(M))\cong H^s_m(F_R(M))$.
\end{enumerate}
\end{prop}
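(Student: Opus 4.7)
I would prove the five parts roughly in the given order, each building on the previous.

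For part (a), the key step is establishing a natural $(T, R)$-bimodule isomorphism
\[T \otimes_R R^f \;\cong\; T^f\]
sending $t \otimes s \mapsto t\, \iota(s)$, where $\iota \colon R \to T$ is the structure map; the only check is compatibility of the right $R$-actions, since on $T^f$ this action is $t \cdot r = t\, \iota(r)^p$ while on $T \otimes_R R^f$ it is $(t \otimes s) \cdot r = t \otimes s r^p$. With this bimodule identification, (a) follows by associativity of tensor products:
\[
F_T(T \otimes_R M) \;=\; T^f \otimes_T T \otimes_R M \;\cong\; T^f \otimes_R M \;\cong\; T \otimes_R R^f \otimes_R M \;=\; T \otimes_R F_R(M).
\]

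For part (b), apply (a) with $T = R_S$ to obtain $F_{R_S}(M_S) \cong R_S \otimes_R F_R(M)$; the remaining isomorphism $F_R(M_S) \cong F_R(M)_S$ is just that tensor commutes with localization. For part (c), (b) reduces the problem to showing that for a local ring $(A, \n)$ with residue field $k = A/\n$ and an $A$-module $N$, $N \neq 0$ implies $F_A(N) \neq 0$. If $N$ is finitely generated, Nakayama gives $N/\n N \neq 0$, and (a) with base change to $k$ yields $F_A(N)/\n F_A(N) \cong F_k(N/\n N)$, which is nonzero since the Frobenius twist of a nonzero $k$-vector space has the same $k$-dimension. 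For arbitrary $N$, the identification $F_R(R/I) \cong R/I^{[p]}$ together with $\sqrt{I^{[p]}} = \sqrt{I}$ handles cyclic modules, and a direct-limit argument (using that $F_R$ commutes with colimits) extends the result.

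For part (d), I would invoke the characterization that over a local Noetherian ring, $M$ is Artinian iff $\Supp M \subseteq \{\m\}$ and $\operatorname{Soc} M = \Hom_R(k, M)$ is finite dimensional over the residue field $k$. Part (c) handles the support condition directly. The socle condition is the main obstacle, since $F_R$ is not left exact and so does not commute with $\Hom$ from $k$; my plan is to embed $M$ into $E^n$ where $E = E_R(R/\m)$ and $n = \dim_k \operatorname{Soc} M < \infty$, then analyze $F_R$ on this embedding, for instance via Matlis duality in the completion (into which we may pass freely, since Artinian modules are preserved under completion), to conclude that $F_R(M)$ has finite socle.

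For part (e), choose a system of parameters $x_1, \ldots, x_s$ for $M$; this exists since $\dim M = s$. The top local cohomology admits the direct-limit presentation
\[
H^s_\m(M) \;\cong\; \directlimit{n}{M/(x_1^n, \ldots, x_s^n) M}
\]
with transition maps induced by multiplication by $x_1 \cdots x_s$. Applying $F_R$, which is right exact and commutes with direct limits, and using that $F_R(M/IM) \cong F_R(M)/I^{[p]} F_R(M)$, gives
\[
F_R(H^s_\m(M)) \;\cong\; \directlimit{n}{F_R(M)/(x_1^{np}, \ldots, x_s^{np}) F_R(M)},
\]
and reindexing along the cofinal subsystem $\{ np \}_{n \geq 1}$ yields $H^s_\m(F_R(M))$, since $\dim F_R(M) = s$ by (c) and $x_1, \ldots, x_s$ remains a system of parameters for $F_R(M)$.
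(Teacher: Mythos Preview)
Parts (a), (b), and (e) are fine; your direct-limit computation for (e) is a clean variant of the paper's \v Cech-complex argument and works for the same reason (both identify $H^s_\m$ with $H^s_{(x_1,\dots,x_s)}$ on $M$ and on $F_R(M)$).

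The gap is in (c) for modules that are not finitely generated, and this propagates to (d). Your Nakayama argument for finitely generated $N$ is correct, but the ``direct-limit argument'' does not close the general case: writing $M=\varinjlim M_\alpha$ over finitely generated submodules gives $F_R(M)=\varinjlim F_R(M_\alpha)$ with each term nonzero, yet the transition maps $F_R(M_\alpha)\to F_R(M_\beta)$ need not be injective (precisely because $F_R$ is not left exact), so nothing prevents the limit from vanishing. Knowing $\Supp F_R(R/I)=\Supp R/I$ for cyclic modules does not help, since $\Supp$ does not commute with such direct limits either. The paper supplies exactly the missing idea: pass to the completion, write $R=S/I$ with $S$ regular local, and use Kunz's theorem that $F_S$ is \emph{exact}. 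Then an associated prime $Q\in\Ass_S M$ gives an injection $S/Q\hookrightarrow M$, hence $S/Q^{[p]}\hookrightarrow F_S(M)$, so $F_S(M)\neq 0$; a short Nakayama-type argument (using $I^{[p]}F_S(M)=0$) then forces $F_R(M)=F_S(M)/IF_S(M)\neq 0$.

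For (d), your plan is not yet a proof. Applying $F_R$ to an embedding $M\hookrightarrow E^n$ gives only a map $F_R(M)\to F_R(E)^n$ with no control on the kernel, so it does not bound $\operatorname{Soc} F_R(M)$; and the Matlis-duality route you allude to identifies $F_R(M)^{\ck}$ with $\Hom_R(M,{}^fE)$, which is not visibly finitely generated unless $R$ is $F$-finite. You also do not address the converse implication $F_R(M)$ Artinian $\Rightarrow$ $M$ Artinian. The paper again reduces to a regular $S$: there $F_S$ is exact, so $F_S\big((0:_M\m)\big)\cong (0:_{F_S(M)}\m^{[p]})$, and finite generation of one socle is equivalent to finite generation of the other; combined with (c) for the support condition, this handles both directions at once.
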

\begin{proof} The first two parts follow easily from properties of tensor products.   For (c),
it suffices to show $\Supp_R M\subseteq \Supp_R F_R(M)$.    Using
(a) and (b), it is enough to prove that if $R$ is a complete local
ring and $M\neq 0$ then $F_R(M)\neq 0$. Then $R=S/I$ where $S$ is a
regular local ring of characteristic $p$. Let $Q\in \Ass_S M$.  Then
there is an exact sequence $0\to S/Q\to M$.  As $S$ is regular,
$F_S$ is exact and we have an injection  $S/Q^{[p]}\to F_S(M)$.
Hence, $F_S(M)\neq 0$.   By part (b) $F_R(M)\cong
S/I\otimes_S F_S(M)$.   As $IM=0$, $I^{[p]}F_S(M)=0$.  Let $t$ be an
integer such that $I^t\subseteq I^{[p]}$.   Now, if $F_R(M)=0$ then
$F_S(M)=IF_S(M)$.  Iterating, we have $F_S(M)=I^tF_S(M)=0$, a
contradiction.  Hence, $F_R(M)\neq 0$.

For (d),  since $\Supp_RM=\Supp_R F_R(M)$ and the support of an Artinian module is finite, it suffices to consider the case $(R,m)$ is a local ring.   Furthemore, for any $R$-module $N$ with $\Supp_R N\subseteq \{m\}$, we have $\hat R\otimes_R N\cong N$.  Hence, we may assume $R$ is complete.   Then $R=S/I$ where $S$ is a regular local ring of characteristic $p$.   As $F_R(M)\cong S/I\otimes_S F_S(M)$, it is clear that if $F_S(M)$ is Artinian then so is $F_R(M)$.  Conversely, if $S/I\otimes_SF_S(M)$ is Artinian, then $F_S(M)$ is Artinian since we also have $I^{[p]}F_S(M)=0$.   Thus, it is enough to prove the result in the case that $R$ is a regular local ring.   Recall that an $R$-module is Artinian if and only if $\Supp_RM\subseteq \{m\}$ and $(0:_Mm)$ is finitely generated.  Since $\Supp_R M=\Supp_R F_R(M)$, it suffices to prove that $(0:_Mm)$ is finitely generated if and only if $(0:_{F_R(M)}m)$ is finitely generated.
As $F_R$ is exact, $F_R((0:_Mm))\cong (0:_{F_R(M)}m^{[p]})$.  But $(0:_Mm)$ is finitely generated if and only if $F_R((0:_Mm))$ is finitely generated, and $(0:_{F_R(M)}m^{[p]})$ is finitely generated if and only if $(0:_{F_R(M)}m)$ is finitely generated.

For (e), let $I=\Ann_RM$ and choose $x_1\dots,x_s\in m$ such that
their images in $R/I$ form a system of parameters. Set
$J=(x_1,\dots,x_s)$.  Then $H^s_m(M)\cong H^s_{J}(M)$. Since
$H^i_{J}(R)=0$ for all $i>s$, $T\otimes_R H^s_{J}(M)\cong
H^i_{TJ}(T\otimes_R M)$ for any $R$-algebra $T$.  Then
$$F_R(H^s_m(M))\cong R^{f}\otimes_R H^s_J(M)\cong
H^s_{J^{[p]}}(F_R(M)).$$ Finally,
 as $I^{[p]}\subseteq \Ann_RF_R(M)$ and $J^{[p]}+I^{[p]}$ is
$m$-primary, we have $H^s_{J^{[p]}}(F_R(M))\cong H^s_m(F_R(M))$.
\end{proof}

We need one more result concerning the Frobenius, which is again well known:

\begin{lemma} \label{tor} Let $(R,m)$ be a local ring of dimension $d$.  If $R$ is Cohen-Macaulay then $\Tor_i^R(R^{f},H^d_m(R))=0$ for all $i\ge 1$.
\end{lemma}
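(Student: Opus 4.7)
The plan is to reduce the computation to Koszul homology via a direct limit argument. Since $R$ is Cohen--Macaulay, choose a system of parameters $x_1,\dots,x_d$ which is also a regular sequence. I would first write
\[
H^d_m(R)\cong \directlimit{n}{R/(x_1^n,\dots,x_d^n)},
\]
where the transition maps are multiplication by $x_1\cdots x_d$. Since Tor commutes with direct limits, it suffices to prove that $\Tor_i^R(R^f,R/(x_1^n,\dots,x_d^n))=0$ for all $i\ge 1$ and all $n\ge 1$.

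Next I would compute this Tor via the Koszul resolution. Because $x_1^n,\dots,x_d^n$ is a regular sequence on the Cohen--Macaulay ring $R$, the Koszul complex $K_\bullet(x_1^n,\dots,x_d^n;R)$ is a free resolution of $R/(x_1^n,\dots,x_d^n)$. The key observation is that applying $F_R=R^f\otimes_R -$ to this complex raises each entry of each differential matrix to the $p$th power, so the resulting complex is exactly $K_\bullet(x_1^{np},\dots,x_d^{np};R)$ (this is the same matrix-level fact used in the preliminary discussion above Proposition \ref{frob}). Since $x_1^{np},\dots,x_d^{np}$ is again a regular sequence on $R$ (using Cohen--Macaulayness), this Koszul complex is acyclic in positive degrees. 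Hence
\[
\Tor_i^R(R^f,R/(x_1^n,\dots,x_d^n))\cong H_i(K_\bullet(x_1^{np},\dots,x_d^{np};R))=0
\]
for every $i\ge 1$.

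Passing to the direct limit over $n$ gives the desired vanishing. The only subtlety worth verifying carefully is the identification of $R^f\otimes_R K_\bullet(x_1^n,\dots,x_d^n;R)$ with $K_\bullet(x_1^{np},\dots,x_d^{np};R)$, which follows from the fact that the right $R$-action on $R^f$ is twisted by Frobenius, so tensoring a free module map represented by a matrix $A$ produces the map represented by $A^{[p]}$. I expect this identification, together with the use of Cohen--Macaulayness to ensure $x_1^{np},\dots,x_d^{np}$ remains a regular sequence, to be the main conceptual content of the argument; the rest is formal.
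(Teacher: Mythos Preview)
Your argument is correct. The paper's proof takes a slightly different, more direct route: instead of writing $H^d_m(R)$ as a direct limit and resolving each $R/(x_1^n,\dots,x_d^n)$ by its Koszul complex, it uses the \v Cech complex $C(\mathbf{x})$ on a system of parameters as a single \emph{flat} resolution of $H^d_m(R)$ (valid precisely because $R$ is Cohen--Macaulay, so $H^i_m(R)=0$ for $i<d$). Applying $R^f\otimes_R-$ yields $C(\mathbf{x}^p)$, which computes the same local cohomology and is therefore again acyclic below degree $d$. Your approach has the virtue of using only free resolutions of finitely generated modules, at the cost of an extra direct-limit step; the paper's approach avoids the limit by working with one flat (but not projective) resolution of the non-finitely-generated module $H^d_m(R)$ directly. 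Conceptually the two arguments are close cousins, since the \v Cech complex is itself the direct limit of the Koszul complexes you are using.
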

\begin{proof} Let $\bold x=x_1,\dots,x_d$ be a system of parameters for $R$ and $C(\bold x)$
the \v Cech cochain complex with respect to $\bold x$.  Note that $F_R(C(\bold x))\cong C(\bold x^p)$ where $\bold x^p=x_1^p,\dots,x_d^p$.  Since $R$ is Cohen-Macaulay,  $\bold x$ is a regular sequence and thus $C(\bold x)$ is a flat resolution of $H^d_m(R)$.  Hence for $i\ge 1$,
$$\Tor_i^R(R^{f},H^d_m(R))\cong \operatorname{H}^{d-i}(R^f\otimes_R C(\bold x))\cong \operatorname{H}^{d-i}(C(\bold x^p))=0.$$
\end{proof}

For a nonzero finitely generated $R$-module $M$ we let $U_R(M)$ be the intersection of all
the primary components $Q$ of $0$ in $M$ such that $\dim M/Q=\dim
M$.  It is easily seen that $U_R(M)=\{x\in M\mid \dim Rx<\dim M\}$.
A local ring  $R$ is said to be {\it unmixed} if $U_{\hat R}(\hat R)=0$.

Let $(R,m)$ be a local ring of dimension $d$, $E=E_R(R/m)$, and
$(-)^{\ck}:=\Hom_R(-,E)$ the Matlis dual functor. A finitely
generated $R$-module $K$ is called a {\it canonical module} of $R$
if $K\otimes_R \hat R\cong H^d_m(R)^{\ck}$.   If a canonical module exists, it is
unique up to isomorphism and denoted by $\omega_R$.  Any complete local ring possesses a canonical module.   More generally, $R$ possesses a canonical module if  $R$ is the
homomorphic image of a Gorenstein ring.  Proofs of these facts can be found in \cite{A} (or
the references cited there). We summarize some additional properties
of canonical modules in the following proposition:

\begin{prop}  \label{canon} Let $R$ be a local ring which possesses a canonical module $\omega_R$ and let \newline $h: R\to \Hom_R(\omega_R,\omega_R)$ be the
natural map.  The following hold:
\begin{enumerate}[(a)]
\item $\Ann_R\omega_R=U_R(R)$.
\item $(\omega_R)_P\cong \omega_{R_P}$ for every prime $P\in \Supp_R \omega_R$.
\item  $\omega_R\otimes_R \hat R\cong \omega_{\hat R}$.
\item $\ker h=U_R(R)$.
\item $h$ is an isomorphism if and only if $R$ satisfies
Serre's condition $S_2$.
\item If $R$ is complete, $\Hom_R(M,\omega_R)\cong H^d_m(M)^{\ck}$ for any $R$-module $M$.
\end{enumerate}
\end{prop}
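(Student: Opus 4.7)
My plan is to prove the six parts in an order that exploits their dependencies: (c) first, then (f), then (a) and (d) together, then (e), with (b) handled last. Part (c) will be the fundamental reduction: starting from the defining identity $\omega_R \otimes_R \hat R \cong H^d_m(R)^\ck$ and observing that local cohomology is unchanged by completion (so $H^d_m(R) \cong H^d_{m\hat R}(\hat R)$ as $\hat R$-modules), I would verify that $\omega_R \otimes_R \hat R$ already satisfies the defining property of $\omega_{\hat R}$ and invoke uniqueness. Once (c) is in hand, the remaining parts can be handled after base change to $\hat R$.

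For (f), assuming $R$ complete so that $\omega_R = H^d_m(R)^\ck$, Hom-tensor adjunction yields $\Hom_R(M,\omega_R) \cong (M\otimes_R H^d_m(R))^\ck$ for any $M$; the desired formula then follows from the isomorphism $H^d_m(M) \cong M\otimes_R H^d_m(R)$, which I would deduce by computing $H^d_m$ as the cokernel of the final differential in the \v{C}ech complex on a system of parameters $\xx=x_1,\dots,x_d$ (using that cokernel commutes with tensoring). For (a), I would reduce to the complete case via (c) and faithful flatness to get $\Ann_R\omega_R = \Ann_R H^d_m(R)$. Writing a primary decomposition $0 = U_R(R) \cap V$ with $\dim R/V < d$ yields a surjection $H^d_m(V) \twoheadrightarrow H^d_m(R)$, and since $U_R(R)\cdot V \subseteq U_R(R)\cap V = 0$ the ideal $U_R(R)$ annihilates $V$ and therefore $H^d_m(V)$, giving $U_R(R) \subseteq \Ann H^d_m(R)$; for the reverse containment I would use that $\omega_{R_P}$ is faithful over the Artinian ring $R_P$ at each $P \in \Assh R$ (being the Matlis dual of $R_P$ itself). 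Then (d) is immediate: the natural map $h$ sends $r$ to multiplication by $r$ on $\omega_R$, so $\ker h = \Ann_R\omega_R = U_R(R)$.

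For (e), I would combine (d) with the standard fact that $\End_R(\omega_R)$ realizes the $S_2$-ification of $R/U_R(R)$: $h$ is an isomorphism precisely when $U_R(R)=0$ and $R/U_R(R)$ is already $S_2$, which together characterize $R$ as satisfying $S_2$. For (b), I would reduce to the complete case and present $R = S/I$ with $S$ Gorenstein local of dimension $n$, identify $\omega_R \cong \Ext^{n-d}_S(R,S)$, and localize at $P$ (with preimage $Q \subset S$) to obtain $(\omega_R)_P \cong \Ext^{n-d}_{S_Q}(R_P, S_Q)$; the catenary identity $\height Q + \dim S/Q = n$ matches $n-d$ with the codimension of $R_P$ in $S_Q$, producing $\omega_{R_P}$. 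The main technical obstacle will be a mild circularity between (a) and (b): the faithfulness argument I use in (a) invokes (b) at the maximal-dimensional associated primes. I plan to resolve this by first establishing (b) in the Artinian case directly — where $R_P$ has dimension zero and the identification $(\omega_R)_P \cong E_{R_P}(R_P/PR_P)$ is transparent from the defining identity — which is all that (a) needs.
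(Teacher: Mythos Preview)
The paper's own proof is almost entirely a citation: parts (a)--(e) are deferred to Aoyama's paper \cite{A}, and only (f) is argued directly, via exactly the adjointness computation you propose (together with the right-exactness isomorphism $H^d_m(M)\cong M\otimes_R H^d_m(R)$). So on (f) your approach and the paper's coincide.

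For (a)--(e) you are supplying arguments the paper omits, and most of your sketches are along standard lines. However, your plan for (b) has a genuine gap. You write that you will ``reduce to the complete case and present $R=S/I$ with $S$ Gorenstein local,'' then localize the $\Ext$ description at $P$. But (b) is a statement about localizing at an arbitrary prime $P$ of $R$, and completing $R$ does not help: a prime of $R$ need not correspond to a single prime of $\hat R$, and $(\hat R)_Q$ for $Q$ lying over $P$ is in general neither $R_P$ nor $\widehat{R_P}$. What you actually need for the $\Ext$ argument is that $R$ \emph{itself} be a homomorphic image of a Gorenstein local ring, and that is not part of the hypotheses of the proposition (the paper only asserts the converse implication before the statement). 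Likewise, your proposed resolution of the circularity between (a) and (b)---that the zero-dimensional case of (b), namely $(\omega_R)_P\cong E_{R_P}(k(P))$ for $P\in\Assh R$, is ``transparent from the defining identity''---is not actually transparent: the defining identity $\omega_R\otimes_R\hat R\cong H^d_m(R)^{\ck}$ lives over $\hat R$ and says nothing directly about $(\omega_R)_P$. Some real work (of the sort carried out in \cite{A}) is needed here, and your outline does not indicate what that work would be.
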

\begin{proof}  The proofs of parts (a)-(e) can be found in \cite{A}.  Part (f) is just  local duality, but it can also be seen directly from the definition of $\omega_R$ and adjointness:  $$\Hom_R(H^d_m(M), E)\cong \Hom_R(M\otimes_RH^d_m(R), E)\cong  \Hom_R(M, H^d_m(R)^{\ck}).$$
\end{proof}

If $R$ is a local ring possessing a canonical module $\omega_R$ such
that $\omega_R\cong R$, then $R$ is said to be {\it
quasi-Gorenstein}.  Equivalently, $R$ is quasi-Gorenstein if and
only if $H^d_m(R)\cong E$.  By the proposition above, if $R$ is
quasi-Gorenstein then $R$ is $S_2$ and $R_P$ is quasi-Gorenstein for
every $P\in \Spec R$.  It is easily seen that $R$ is Gorenstein if
and only if $R$ is Cohen-Macaulay and quasi-Gorenstein.   Finally, there exist quasi-Gorenstein rings which are not Cohen-Macaulay (e.g., \cite{A}).

\begin{section}{Rings for which Frobenius preserves injectives}
\end{section}

To facilitate our discussion we make the following definition:

\begin{defn}{\rm A Noetherian ring of characteristic $p$ is said to be {\it FPI} (i.e., `Frobenius
Preserves Injectives') if $F_R(I)$ is injective for every
injective $R$-module $I$.  We say that $R$ is {\it weakly FPI} if
$F_R(I)$ is injective for every Artinian injective $R$-module $I$.}
\end{defn}

We note that $R$ is FPI (respectively, weakly FPI) if and only if
$F_R(E_R(R/P))$ is injective for every prime (respectively, maximal)
ideal $P$ of $R$.   Also, since Frobenius commutes with
localization, $R$ is FPI if and only if $R_P$ is weakly FPI for
every prime ideal $P$.

\begin{prop} \label{frob-injective} Let $I$ be an injective $R$-module and suppose $F_R(I)$ is injective.  Then $F_R(I)\cong I$.
\end{prop}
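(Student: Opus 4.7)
The plan is to use the Matlis structure theorem to reduce to proving $F_R(E_R(R/P))\cong E_R(R/P)$ for each prime $P$, and then to analyze this module by localizing at $P$.

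First, by the Matlis structure theorem, write $I=\bigoplus_\lambda E_R(R/P_\lambda)$. Since $F_R=R^f\otimes_R-$ is additive and commutes with direct sums, $F_R(I)=\bigoplus_\lambda F_R(E_R(R/P_\lambda))$, and the injectivity of $F_R(I)$ forces each direct summand to be injective (a direct summand of an injective module is injective). Thus it suffices to prove $F_R(E_R(R/P))\cong E_R(R/P)$ for every prime $P$ of $R$.

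Fix such a $P$. Since $E_R(R/P)\cong E_{R_P}(k(P))$ is naturally an $R_P$-module, Proposition~\ref{frob}(b) gives $F_R(E_R(R/P))\cong F_{R_P}(E_{R_P}(k(P)))$, so this module is itself an $R_P$-module. Applying Proposition~\ref{frob}(d) over the local ring $R_P$, the module is Artinian over $R_P$, hence supported only at the unique maximal ideal $PR_P$. An $R$-injective module which is an $R_P$-module is automatically $R_P$-injective (any $R_P$-linear map from an $R_P$-module $N$ to it is simply an $R$-linear map). Thus the Matlis structure theorem over $R_P$ yields $F_R(E_R(R/P))\cong E_{R_P}(k(P))^n\cong E_R(R/P)^n$ for some integer $n\geq 1$. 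Notably, this step automatically excludes any ``embedded'' summand $E_R(R/Q)$ with $Q\supsetneq P$, so the only remaining issue is the multiplicity.

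The main obstacle is then to verify that $n=1$. My plan here is to pass to the completion $\widehat{R_P}$ and invoke Matlis duality: the isomorphism $F_R(E_R(R/P))\cong E_R(R/P)^n$ dualizes to $\Hom_R(F_R(E_R(R/P)),E_R(R/P))\cong \widehat{R_P}^n$. By the Hom-tensor adjunction between $F_R=R^f\otimes_R-$ and its right adjoint $\Hom_R(R^f,-)$ (restriction of scalars along Frobenius), this dual $R$-module identifies with a module of Frobenius-twisted semilinear endomorphisms of $E_R(R/P)$. Showing that this module of semilinear endomorphisms is cyclic---which is forced by the injectivity hypothesis on $F_R(E_R(R/P))$---gives $n=1$ and completes the proof.
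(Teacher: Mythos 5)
Your reduction steps are fine and essentially follow the paper's strategy: decompose $I$ into indecomposable injectives, use additivity of $F_R$ and the fact that direct summands of injectives are injective, then localize at $P$ via Proposition~\ref{frob}(b), and invoke Proposition~\ref{frob}(d) plus Matlis' structure theorem to conclude $F_{R_P}(E_{R_P}(k(P)))\cong E_{R_P}(k(P))^n$ for some $n\ge1$. All of this is correct and matches the paper's reduction to a complete local ring with $I=E$.

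The problem is the final step, which is where the entire content of the proposition lives. You write that the module of Frobenius-semilinear endomorphisms of $E$ (namely $\Hom_R(F_R(E),E)\cong\widehat{R_P}^n$) ``is cyclic---which is forced by the injectivity hypothesis---gives $n=1$.'' This is circular: the injectivity hypothesis has already been fully spent in obtaining $F_R(E)\cong E^n$, and asserting that the dual $\widehat{R_P}^n$ is cyclic is a restatement of $n=1$, not an argument for it. Nor is cyclicity of the module of Frobenius-semilinear endomorphisms of $E$ an automatic fact: for $R=k[x,y]/(x^2,xy,y^2)$ one computes $F_R(E)\cong R^2$, whose Matlis dual is $E^2$, which requires four generators. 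So some genuine input is needed to force $n=1$ once one knows $F_R(E)\cong E^n$. The paper supplies this via a length-comparison argument: after completing, it builds (using the canonical module $\omega_R$ and the submodule $U_R(R)$) an exact sequence $H^d_m(R)^s\to E\to U^{\ck}\to 0$, applies $F_R^e$ to get $H^d_m(R)^s\to E^{n^e}\to F_R^e(U^\ck)\to 0$, dualizes to get an injection $0\to F_R^e(U^\ck)^\ck\to R^{n^e}\to\omega_R^s$, and then localizes at a prime $P$ with $\dim R/P=d$ where the kernel term vanishes, producing an injection $R_P^{n^e}\hookrightarrow\omega_{R_P}^s$; comparing finite lengths over the Artinian ring $R_P$ forces $n^e\le s\cdot\ell(\omega_{R_P})/\ell(R_P)$ for all $e$, hence $n=1$. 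Your proposal has no substitute for this argument, so it has a genuine gap at precisely the step the paper works hardest to establish.
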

\begin{proof}  By the remarks above, it suffices to prove this in the case $R$ is local and $I=E_R(R/m)$ where $m$ is the maximal ideal.    And as $E_R(R/m)\cong E_{\hat R}(\hat R/\hat m)$, we may also assume $R$ is complete.  Let $E=E_R(R/m)$ and $d=\dim R$.  Since $E$ is Artinian, $F_R(E)$ is Artinian by Proposition \ref{frob}(d).  Hence, $F_R(E)\cong E^n$ for some integer $n\ge 1$.  It suffices to show that $n=1$.  Let $U=U_R(R)$.  By  parts (d) and (f) of Proposition
\ref{canon}, we have an exact sequence $0\to R/U\to H^d_m(\omega_R)^{\ck}$.  Dualizing, we obtain a surjection $H^d_m(\omega_R)\to E_{R/U}\to 0$, where $E_{R/U}:=E_{R/U}(R/m)\cong \Hom_R(R/U,E)$.   Since $\omega_R$ is a finitely generated $R$-module, we have a surjection $R^s\to \omega_R\to 0$ for some $s$.  This yields an exact sequence $H^d_m(R)^s\to H^d_m(\omega_R)\to 0$.   Composing, we obtain an exact sequence
\begin{align*}H^d_m(R)^s\to E_{R/U}\to 0.\tag{*}
\end{align*}
Now consider the exact sequence $0\to U\to R\to R/U\to 0$.  Applying the Matlis dual, we have that
$$0\to E_{R/U}\to E\to U^{\ck}\to 0$$ is exact.  Combining with  (*), we obtain an exact sequence
$$H^d_m(R)^s\to E\to U^{\ck}\to 0.$$
Applying $F_R^e$ and using that $F_R(E)\cong E^n$, we obtain the exact sequence
$$H^d_m(R)^s\to E^{n^e}\to F_R^e(U^{\ck})\to 0.$$ Dualizing again, we have that
\begin{align*}
0\to F_R^e(U^{\ck})^{\ck}\to R^{n^e}\to \omega_R^s\tag{**}
\end{align*}
is exact.  Let $I=\Ann_R U=\Ann_R U^{\ck}$.   Since $\dim R/I<d$ and $I^{[q]}\subseteq \Ann_R F_R^e(U^{\ck})=\Ann_R F_R^e(U^{\ck})^{\ck}$ (where $q=p^e$), we have $\dim F^e(U^{\ck})^{\ck}<d$.  Let $P$ be a prime ideal of $R$ such that $\dim R/P=d$.  Localizing (**) at $P$, we obtain an injection $0\to R_P^{n^e}\to \omega_{R_P}^s$.  If $n>1$, this is easily seen to be a contradiction by comparing lengths.
\end{proof}

We summarize some properties of FPI rings in the following proposition.
Recall that a ring $R$ is said to be {\it generically Gorenstein} if $R_P$ is Gorenstein for every $P\in \Min _RR$.

\begin{prop} \label{fpi}  Let $R$ be a Noetherian ring.
\begin{enumerate}[(a)]
\item If $R$ is FPI and $S$ is a multiplicatively closed set of $R$ then $R_S$ is FPI.
\item If $R$ is FPI then $R$ is generically Gorenstein.
\item If $R$ is local then $R$ is weakly FPI if and only if $\hat R$ is weakly FPI.
\item Let $S$ be a faithfully flat $R$-algebra which is FPI and suppose  that the fibers $k(P)\otimes_RS$ are generically Gorenstein for all $P\in \Spec R$.   Then $R$ is FPI.
\item Suppose $R$ is the homomorphic image of a Gorenstein local ring.  If $\hat R$ is FPI then so is $R$.
\end{enumerate}
\end{prop}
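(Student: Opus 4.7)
The plan is: (a)--(c) follow quickly from Proposition \ref{frob} and Proposition \ref{frob-injective}; (e) reduces to (d) via the completion map; and (d) is the core argument, exploiting flat base change together with faithful flatness.

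For (a), any injective $R_S$-module $I$ is automatically $R$-injective, so by Proposition \ref{frob}(b) we have $F_{R_S}(I) \cong F_R(I)$, which is $R$-injective by the FPI hypothesis on $R$; being an $R_S$-module that is $R$-injective, it is $R_S$-injective. For (b), localizing at a minimal prime $P$ and applying (a) reduces us to showing an Artinian local FPI ring is Gorenstein: since $\depth R_P = 0$, Proposition \ref{frob-injective} supplies $\Tor_0^{R_P}(R_P^f, E_{R_P}) \cong F_{R_P}(E_{R_P}) \cong E_{R_P}$, while the higher Tor conditions in Theorem \ref{Tor-condition} are vacuous, forcing $R_P$ Gorenstein. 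For (c), since $E := E_R(R/m) = E_{\hat R}(\hat R/\hat m)$ and $F_R(E)$ is Artinian by Proposition \ref{frob}(d) and hence $m$-torsion, Proposition \ref{frob}(a) gives $F_{\hat R}(E) = F_{\hat R}(\hat R \otimes_R E) \cong \hat R \otimes_R F_R(E) \cong F_R(E)$; and injectivity as an $R$-module versus as an $\hat R$-module coincides for Artinian modules over a local ring.

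For (d), fix $P \in \Spec R$, set $I = E_R(R/P)$, and aim to show $F_R(I)$ is $R$-injective. Proposition \ref{frob}(a) identifies $S \otimes_R F_R(I) \cong F_S(S \otimes_R I)$. The generically Gorenstein fiber hypothesis is precisely what is needed to ensure that $S \otimes_R I$ is an injective $S$-module: its $S$-associated primes lie among the minimal primes of the fiber $k(P) \otimes_R S$, at which the fiber is Gorenstein, giving the required decomposition into indecomposable injective hulls over $S$. Since $S$ is FPI, $F_S(S \otimes_R I)$ is $S$-injective; then faithfully flat descent---via the isomorphism $\Ext^1_S(N \otimes_R S, S \otimes_R F_R(I)) \cong S \otimes_R \Ext^1_R(N, F_R(I))$ for finitely generated $R$-modules $N$, combined with faithful flatness---forces $\Ext^1_R(N, F_R(I)) = 0$, so $F_R(I)$ is $R$-injective. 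For (e), take $S = \hat R$: writing $R = A/J$ for $A$ Gorenstein local, both $A$ and $\hat A$ are Gorenstein with $A \to \hat A$ flat, so its fibers are Gorenstein, and these coincide with the fibers of $R \to \hat R$ at primes of $R$; thus the hypothesis of (d) is satisfied.

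The main obstacle is the claim in (d) that $S \otimes_R E_R(R/P)$ is $S$-injective under the generically Gorenstein fiber hypothesis. This requires a careful analysis of the $S$-module structure of the flat base change of an indecomposable injective $R$-module, and is precisely the step at which the Gorenstein condition on the minimal primes of the fiber $k(P) \otimes_R S$ becomes essential; without this assumption, the expected decomposition into indecomposable injective hulls over $S$ breaks down.
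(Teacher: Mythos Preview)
Your arguments for (a), (c), and (e) are sound and align with the paper's approach. There are, however, real issues in (b) and (d).

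For (b), your invocation of Theorem \ref{Tor-condition} is circular: the proof of that theorem (specifically the final step, where the depth-zero quotient $R/(\bold x)$ is shown to be Gorenstein) explicitly cites Proposition \ref{fpi}(b). The paper instead argues directly: in an Artinian local ring $(R,m)$ one has $m^{[q]}=0$ for $q=p^e$ large, so choosing a minimal presentation shows $F_R^e(M)$ is free for every finitely generated $M$. Since $E$ is finitely generated here and $F_R(E)\cong E$ by Proposition \ref{frob-injective}, iterating gives $E\cong F_R^e(E)$ free, whence $R$ is self-injective and therefore Gorenstein.

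For (d), the assertion that $S\otimes_R E_R(R/P)$ is $S$-injective is not justified by your sketch. One does have $\Ass_S\bigl(S\otimes_R E_R(R/P)\bigr)=\Ass_S(S/PS)$, and these primes all contract to $P$; but nothing in the hypotheses forces them to be minimal in the fiber $k(P)\otimes_R S$, and in any case knowing the associated primes does not by itself yield an injective decomposition. Foxby's theorem requires the relevant fiber to be Gorenstein, not merely generically Gorenstein. The paper avoids this by first localizing $S$ at a prime $Q$ minimal over $PS$: then $R_P\to S_Q$ is faithfully flat with \emph{zero-dimensional} closed fiber $S_Q/PS_Q$, which is Gorenstein precisely by the generically Gorenstein hypothesis, and now Foxby's theorem applies to give $S_Q\otimes_{R_P} E_{R_P}(k(P))$ injective over $S_Q$. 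After this localization, your descent via faithful flatness goes through; the localization of $S$ is the missing idea.
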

\begin{proof}  Part (a) follows easily from the fact that Frobenius commutes with localization. To prove (b),  it suffices to prove that if $R$ is local, zero-dimensional, and FPI then $R$ is Gorenstein.  In this situation, note that if $M$ is a finitely generated $R$-module then $F^e_R(M)$ is free for sufficiently large $e$.  Hence, as $F_R(E)\cong E$, $E$ must be free.  This implies $R$ is injective and therefore Gorenstein.

Part (c) follows from the fact that $E_R(R/m)\cong E_{\hat R}(\hat R/\hat m)$.

To prove (d), let $P\in \Spec R$ and $E=E_R(R/P)$.  It suffices to
show that $F_R(E)$ is injective. Let $Q\in \Spec S$ which is minimal
over $PS$.  Then $S_Q$ is a faithfully flat $R_P$-algebra and is FPI
by part (a).  Hence, we may assume $(R,m)$ and $(S,n)$ are local,
$E=E_R(R/m)$, and the fiber $S/mS$ is zero-dimensional Gorenstein.
By \cite[Theorem 1]{F}, $S\otimes_RE$ is injective. As $S$ is FPI,
$S\otimes_R F_R(E)\cong F_S(S\otimes_RE)$ is injective.  Since $S$
is faithfully flat over $R$, this implies $F_R(E)$ is injective.

Part (e) follows from (d) since the hypothesis implies that the formal fibers of $R$ are Gorenstein.
\end{proof}

The following result is essentially \cite[Proposition 1.5]{HS}:

\begin{prop} \label{quasi} Let $R$ be a quasi-Gorenstein local ring.  Then $R$ is FPI.
\end{prop}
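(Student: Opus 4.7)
The plan is to reduce to showing $F_R(E) \cong E$ for $E = E_R(R/m)$, and then to obtain this isomorphism from the local-cohomology description of quasi-Gorensteinness together with Proposition \ref{frob}(e).

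First I would handle the reduction. Any injective $R$-module decomposes as $\bigoplus_\alpha E_R(R/P_\alpha)$ with $P_\alpha$ prime; since $F_R$ commutes with direct sums and $R$ is Noetherian (so arbitrary direct sums of injectives are injective), it suffices to show $F_R(E_R(R/P))$ is injective for each prime $P$. Each $E_R(R/P)$ is naturally an $R_P$-module isomorphic to $E_{R_P}(R_P/PR_P)$, and Proposition \ref{frob}(b) identifies $F_R(E_R(R/P))$ with $F_{R_P}(E_{R_P}(R_P/PR_P))$ as an $R_P$-module. Because $\omega_R \cong R$ combined with Proposition \ref{canon}(b) forces $\omega_{R_P} \cong R_P$ for every $P$, each localization $R_P$ is again quasi-Gorenstein, and the problem reduces to proving $F_R(E) \cong E$ under the standing assumption that $R$ is local quasi-Gorenstein.

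For this step, I would use the local-cohomology formulation of the hypothesis: $\omega_R \cong R$, combined with Proposition \ref{canon}(c), yields $\omega_{\hat R} \cong \hat R$, hence $H^d_m(R)^{\ck} \cong \hat R$, where $d = \dim R$. Matlis-dualizing (and using that $E \cong E_{\hat R}(\hat R/\hat m)$ together with the invariance of local cohomology under completion) gives $H^d_m(R) \cong E$. Now Proposition \ref{frob}(e) applied to $M = R$, together with the obvious $F_R(R) \cong R$, yields
\[F_R(E) \cong F_R(H^d_m(R)) \cong H^d_m(F_R(R)) \cong H^d_m(R) \cong E,\]
which is injective.

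I do not expect a serious obstacle: every ingredient needed — the commutation of $F_R$ with top local cohomology, the invariance of $E$ and local cohomology under completion, and the good localization behaviour of canonical modules — is already supplied by Section 2, and the proof amounts to stringing these together. The one technical point worth verifying is that, in the localization step, restriction of scalars along $R \to R_P$ preserves injectivity for modules supported only at $\{P\}$, a standard fact whose invocation one should state explicitly.
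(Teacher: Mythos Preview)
Your proposal is correct and follows essentially the same route as the paper: reduce to the injective hull of the residue field via localization (using that $R_P$ is again quasi-Gorenstein), identify $E$ with $H^d_m(R)$, and apply Proposition~\ref{frob}(e). The paper's proof is terser but structurally identical; your additional justification of $H^d_m(R)\cong E$ via completion and of the localization/restriction-of-scalars step simply fills in details the paper leaves implicit.
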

\begin{proof}  Let $P\in \Spec R$ and $E=E_R(R/P)$.  It suffices to show that $F_R(E)$ is injective.  Since $R_P$ is quasi-Gorenstein, we may assume $P=m$.   Then $E\cong H^d_m(R)$ where $d=\dim R$.  Hence,  by Proposition \ref{frob}(e), $F_R(E)\cong F_R(H^d_m(R))\cong H^d_m(R)\cong E$.
\end{proof}

Next, we show that for a large class of rings, weakly FPI implies
FPI.  First, we prove a few preliminary results.

\begin{lemma} \label{lem1} Let $\phi:R\to S$ be a homomorphism of
local rings such that $S$ is finite as an $R$-module.  Let $k$ and
$\ell$ denote the residue fields of $R$ and $S$, respectively, and
set $E_R=E_R(k)$ and $E_S=E_S(\ell)$.  Then $\Hom_R(S,E_R)\cong
E_S$. \end{lemma}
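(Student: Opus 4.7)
The plan is to show $\Hom_R(S,E_R)$ carries an $S$-module structure making it an injective $S$-module with simple socle $\ell$ and support $\{\n\}$, so it must be $E_S$.

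First, I would prove injectivity of $\Hom_R(S,E_R)$ over $S$ via adjunction. The restriction-of-scalars functor from $S$-modules to $R$-modules (using $\phi$) is exact, and its right adjoint is $\Hom_R(S,-)$. Since an exact left adjoint forces the right adjoint to preserve injectives, $\Hom_R(S,E_R)$ is an injective $S$-module.

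Next, I would compute the socle. Since $S$ is finite over the local ring $R$, the homomorphism $\phi$ is forced to be local (the unique maximal ideal $\n$ of $S$ contracts to $m$ by lying-over), so $\ell=S/\n$ is a finite-dimensional $k$-vector space, say of dimension $d$. Adjunction gives
\[
\Hom_S(\ell,\Hom_R(S,E_R))\cong \Hom_R(\ell,E_R)\cong \Hom_k(\ell,\Hom_R(k,E_R))\cong \Hom_k(\ell,k),
\]
where I used that $\Hom_R(k,E_R)=(0:_{E_R}m)=\operatorname{soc}(E_R)\cong k$. This $k$-dual $\Hom_k(\ell,k)$ is $d$-dimensional over $k$, hence one-dimensional over $\ell$; so $\operatorname{soc}_S\Hom_R(S,E_R)\cong \ell$ as $S$-modules.

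Finally, I would identify $\Hom_R(S,E_R)$ as $E_S$. Choosing generators $s_1,\dots,s_n$ of $S$ as an $R$-module gives an $R$-linear injection $\Hom_R(S,E_R)\hookrightarrow E_R^n$ via $f\mapsto (f(s_1),\dots,f(s_n))$, so $\Hom_R(S,E_R)$ is Artinian over $R$, hence over $S$. Moreover $E_R$ is $m$-torsion, so $\Hom_R(S,E_R)$ is $m$-torsion; combined with $\n^t\subseteq mS$ for some $t$ (since $\n/mS$ is nilpotent in the Artinian ring $S/mS$), it is also $\n$-torsion, so $\Sp_S\Hom_R(S,E_R)\subseteq\{\n\}$. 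By Matlis theory over the local ring $S$, an injective $S$-module supported at $\{\n\}$ is a direct sum of copies of $E_S$, and the simple socle forces exactly one summand. Thus $\Hom_R(S,E_R)\cong E_S$.

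The only delicate step is the socle computation — one must be careful that the $k$-linear isomorphism $\Hom_k(\ell,k)\cong \ell$ is compatible with the full $S$-module structure coming from $S$ acting on the first coordinate of $\Hom_R(S,E_R)$ — but since both sides are annihilated by $\n$ and the identification is functorial, this is automatic.
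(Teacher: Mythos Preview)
Your proof is correct and follows essentially the same strategy as the paper: both arguments show that $\Hom_R(S,E_R)$ is an Artinian injective $S$-module, hence isomorphic to $E_S^n$ for some $n$, and then compute the socle via the adjunction $\Hom_S(\ell,\Hom_R(S,E_R))\cong \Hom_R(\ell,E_R)$ to conclude $n=1$. The paper's version is simply more terse (it declares ``Clearly, $\Hom_R(S,E_R)$ is an Artinian injective $S$-module'' and compares $k$-dimensions directly), whereas you spell out the adjunction for injectivity, the support argument, and the $\ell$-dimension count.
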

\begin{proof} Clearly, $\Hom_R(S,E_R)$ is an Artinian injective
$S$-module.  Hence, $\Hom_R(S,E_R)\cong E_S^n$ for some $n$.  It
suffices to show that $n=1$.  Since $S$ is a finite $R$-module,
$\ell=k^t$ for some $t$.  Then $nt=\dim_k\Hom_S(\ell, E_S^n)$. But
$\Hom_S(\ell, E_S^n)\cong \Hom_S(\ell, \Hom_R(S,E_R))\cong
\Hom_R(k^t, E_R)\cong k^t$ as $R$-modules.  Thus, $n=1$.
\end{proof}

\begin{prop} \label{prop1} Let $\phi:R\to S$ be a homomorphism of local rings such
that $S$ is a finite $R$-module.  Let $E_R$ and $E_S$ be as in Lemma
\ref{lem1}.  Then for any $R$-module $M$ we have
$$\Hom_S(S\otimes_RM, E_S)\cong \Hom_R(S, \Hom_R(M,E_R)).$$
\end{prop}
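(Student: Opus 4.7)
The proof is essentially a chain of natural isomorphisms combining standard tensor-hom adjunction with the identification from Lemma \ref{lem1}. The plan is to reduce everything to computing $\Hom_R(S \otimes_R M, E_R)$ in two different ways.

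First, I would invoke Lemma \ref{lem1} to rewrite $E_S \cong \Hom_R(S, E_R)$ as $S$-modules (or equivalently, as $R$-modules). Next, I would use the Hom-tensor adjunction associated to the ring map $\phi : R \to S$: for any $S$-module $N$, one has the natural identification
\[
\Hom_S(S \otimes_R M, N) \cong \Hom_R(M, N),
\]
with $N$ restricted to $R$ on the right. Applying this with $N = E_S$ yields
\[
\Hom_S(S \otimes_R M, E_S) \cong \Hom_R(M, E_S) \cong \Hom_R\bigl(M, \Hom_R(S, E_R)\bigr).
\]

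Finally, I would apply the classical Hom-tensor adjunction inside the category of $R$-modules (using the commutativity of tensor product over $R$):
\[
\Hom_R\bigl(M, \Hom_R(S, E_R)\bigr) \cong \Hom_R(M \otimes_R S, E_R) \cong \Hom_R\bigl(S, \Hom_R(M, E_R)\bigr).
\]
Splicing these isomorphisms together gives the desired identification. Each of these isomorphisms is natural in $M$, so the composite is as well.

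The only nontrivial input is Lemma \ref{lem1}, which has already been established; the rest is bookkeeping with adjunctions. There is no substantive obstacle, but one should be careful that the $S$-action appearing on both sides matches up under the chain of maps. To verify this, I would keep track of explicit formulas: a homomorphism $f : S \otimes_R M \to E_S$ on the left should correspond to the map $S \to \Hom_R(M, E_R)$ sending $s$ to the $R$-linear map $m \mapsto \bigl(\text{image of }f(s \otimes m)\text{ under }E_S \cong \Hom_R(S,E_R)\text{ evaluated at } 1\bigr)$. Checking this explicit formula confirms the isomorphism is natural and canonical, completing the proof.
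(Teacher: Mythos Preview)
Your proof is correct and follows essentially the same approach as the paper: both invoke Lemma~\ref{lem1} to replace $E_S$ by $\Hom_R(S,E_R)$ and then pass through $\Hom_R(S\otimes_R M, E_R)$ via two standard adjunctions. The only cosmetic difference is the order: the paper first applies the restriction--coinduction adjunction $\Hom_S(-,\Hom_R(S,E_R))\cong \Hom_R(-,E_R)$ with $S\otimes_R M$ in the first slot, whereas you first apply the extension--restriction adjunction $\Hom_S(S\otimes_R M,-)\cong \Hom_R(M,-)$; both routes land on $\Hom_R(S\otimes_R M,E_R)$ and finish identically.
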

\begin{proof} Using adjointness and Lemma \ref{lem1}, we have
\begin{align*}
\Hom_S(S\otimes_RM, E_S)&\cong \Hom_S(S\otimes_RM, \Hom_R(S,E_R))\\
&\cong \Hom_R(S\otimes_RM, E_R)\\
&\cong \Hom_R(S, \Hom_R(M,E_R)).
\end{align*}
\end{proof}

Recall that a ring $R$ of characteristic $p$ is called {\it
$F$-finite} if the Frobenius map $f:R\to R$ is a finite morphism;
i.e., $R^f$ is finite as a right $R$-module. Also, for a right
$R$-module $M$ let $\Hom_R(R^{f},M)$ denote the set of right
$R$-module homomorphisms from $R^{f}$ to $M$ and viewed as a left
$R$-module in the natural way.

\begin{cor} \label{cor1} Let $R$ be an $F$-finite local ring. Then
$F_R(M)^{\ck}\cong \Hom_R(R^f,M^{\ck})$ for any $R$-module $M$.
\end{cor}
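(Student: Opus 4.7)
The plan is to deduce the corollary directly from Proposition \ref{prop1} applied to the Frobenius map $\phi=f:R\to R^f$ itself. Since $R$ is $F$-finite by hypothesis, $R^f$ is finite as an $R$-module via $f$, so the hypotheses of Proposition \ref{prop1} are met; and since $R^f$ coincides with $R$ as a ring, both are local with the same residue field.

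Applying Proposition \ref{prop1} to the homomorphism $f$ and the $R$-module $M$ yields
$$\Hom_{R^f}(R^f\otimes_R M,\, E_{R^f}) \;\cong\; \Hom_R(R^f,\, \Hom_R(M, E_R)).$$
The left-hand side simplifies to $F_R(M)^{\ck}$: because $R^f$ and $R$ agree as rings, their residue fields and the injective hulls thereof agree, so $E_{R^f}=E_R=E$; $R^f$-linearity coincides with $R$-linearity; and $R^f\otimes_R M=F_R(M)$. The right-hand side is by definition of the Matlis dual exactly $\Hom_R(R^f, M^{\ck})$. Combining these identifications gives the claim.

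The only real subtlety, and the point most worth checking carefully, is the compatibility of module structures on $R^f$ between the two settings. The tensor product in Proposition \ref{prop1} uses the $R$-algebra structure on $R^f$ coming from $f$, which is exactly the right $R$-module structure $s\cdot r = sf(r)$ prescribed in the definition of $R^f$; likewise $\Hom_R(R^f,-)$ on the right side uses this same structure, matching the notation for right $R$-module homomorphisms introduced in the paragraph just before the corollary statement. Once these identifications are verified, no further calculation is needed.
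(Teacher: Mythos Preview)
Your argument is correct and is exactly the paper's approach: the paper simply states that the corollary is a restatement of Proposition \ref{prop1} with $S$ taken to be $R$ viewed as an $R$-module via $f$. Your careful verification of the module-structure compatibilities is a welcome elaboration of that one-line proof.
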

\begin{proof} This is just a restatement of Proposition \ref{prop1},
where $S$ is the ring $R$ viewed as an $R$-module via $f$.
\end{proof}

\begin{prop} \label{prop2} Suppose $R$ is an $F$-finite local ring
and $E$ the injective hull of the residue field of $R$.  The
following are equivalent:
\begin{enumerate}
\item $\Hom_R(R^f,R)\cong R$;
\item $F_R(E)\cong E$.
\end{enumerate}
\end{prop}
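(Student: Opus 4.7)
The plan is to use Corollary~\ref{cor1} to dualize condition (2) and then descend from $\hat R$ to $R$ via faithful flatness. Taking $M=E$ in Corollary~\ref{cor1}, together with the standard Matlis identification $E^{\ck}\cong \hat R$, yields
$$F_R(E)^{\ck}\cong \Hom_R(R^f,\hat R).$$
Since $F_R(E)$ is Artinian by Proposition~\ref{frob}(d), it is Matlis reflexive, so (2) is equivalent to $F_R(E)^{\ck}\cong E^{\ck}$, i.e.\ to $\Hom_R(R^f,\hat R)\cong \hat R$.

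Because $R$ is $F$-finite and Noetherian, $R^f$ is finitely presented as an $R$-module, so flat base change for $\Hom$ along the flat map $R\to \hat R$ gives
$$\Hom_R(R^f,\hat R)\cong \Hom_R(R^f,R)\otimes_R\hat R.$$
Setting $M:=\Hom_R(R^f,R)$, condition (2) thus becomes $\hat M\cong \hat R$, while (1) asserts $M\cong R$. The implication $(1)\Rightarrow(2)$ is immediate by tensoring with $\hat R$.

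For $(2)\Rightarrow(1)$, the isomorphism $\hat M\cong \hat R$ forces $\mu_R(M)=\mu_{\hat R}(\hat M)=1$, so Nakayama produces a surjection $\phi\colon R\twoheadrightarrow M$. Completing yields a surjection $\hat\phi\colon \hat R\twoheadrightarrow \hat M\cong \hat R$; since a surjective endomorphism of a Noetherian module is automatically injective, $\hat\phi$ is an isomorphism, forcing $\widehat{\ker\phi}=0$ and hence $\ker\phi=0$ by faithful flatness. The only subtle point I anticipate is the bimodule bookkeeping on $R^f$ when applying flat base change for $\Hom$ and the Matlis duality identifications, but since $R$ is commutative these issues are essentially notational; no serious obstacle is expected.
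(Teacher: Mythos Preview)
Your proof is correct and follows essentially the same route as the paper: both arguments rest on Corollary~\ref{cor1} applied with $M=E$, giving $F_R(E)^{\ck}\cong\Hom_R(R^f,E^{\ck})$. The paper simply asserts ``without loss of generality we may assume $R$ is complete'' (so that $E^{\ck}=R$ and the equivalence is immediate from Matlis reflexivity), whereas you carry out that reduction explicitly by identifying $\Hom_R(R^f,\hat R)$ with the completion of $\Hom_R(R^f,R)$ and then descending along the faithfully flat map $R\to\hat R$; this is exactly the content hidden in the paper's ``WLOG''. One small caveat: the bimodule bookkeeping you flag is not purely notational, since $\Hom_R(R^f,R)$ carries two genuinely different $R$-actions (via the target versus via the left action on $R^f$), and the flat base-change isomorphism is a priori linear only for the former---but one checks directly that it also respects the latter, so no real obstacle arises, as you anticipated.
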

\begin{proof} Without loss of generality, we may assume $R$ is
complete.  Then by Corollary \ref{cor1}, we have $F_R(E)^{\ck}\cong
\Hom_R(R^f,R)$.  The result now follows.
\end{proof}

\begin{thm}\label{fpi=wfpi} Let $R$ be the homomorphic image of a Gorenstein ring.
Then $R$ is $FPI$ if and only if $R$ is weakly FPI.
\end{thm}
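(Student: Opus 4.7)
The forward implication FPI $\Rightarrow$ weakly FPI is immediate from the definitions, so the task is to prove the converse. Assuming $R$ is weakly FPI, my plan is first to reduce to the complete local case, then to translate the hypothesis via Matlis duality into a statement about $\Hom_R(R^f,R)$, and finally to transfer this statement to each prime $P$ of $R$.

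For the reduction, recall that $R$ is FPI if and only if $R_P$ is weakly FPI for every prime $P$, so both properties are local at the maximal ideals; hence I may assume $R$ is local. Proposition \ref{fpi}(c) then gives that $\hat R$ is still weakly FPI, and Proposition \ref{fpi}(e)---which invokes the hypothesis that $R$ is a homomorphic image of a Gorenstein ring to ensure the formal fibres of $R$ are Gorenstein---reduces the proof of FPI for $R$ to that for $\hat R$. So I may assume $R$ is complete local, in which case $R$ has a canonical module.

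Now, with $R$ complete local and $E=E_R(R/m)$, Proposition \ref{frob-injective} provides $F_R(E)\cong E$. Applying the Matlis dual $(-)^{\ck}=\Hom_R(-,E)$, together with the identification $E^{\ck}\cong R$ and tensor--Hom adjunction, yields
\[
  \Hom_R(R^{f},R)\;\cong\; R.
\]
This is essentially Proposition \ref{prop2}; although that proposition is stated for $F$-finite rings, the underlying Matlis dual calculation works for any complete local ring. To conclude $R$ is FPI, the goal is then to establish the analogous isomorphism $\Hom_{\hat{R_P}}(\hat{R_P}^{\,f},\hat{R_P})\cong\hat{R_P}$ for every $P\in\Spec R$, which by the same Matlis dual computation over $\hat{R_P}$ is equivalent to $\hat{R_P}$ being weakly FPI.

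The hard part will be transferring the identity $\Hom_R(R^{f},R)\cong R$ to its analogue at each $\hat{R_P}$. When $R$ is $F$-finite, $R^{f}$ is finitely presented over $R$, so the isomorphism localizes and completes freely, and the $F$-finite case of the theorem is immediate. In general one natural route is Hochster's $\Gamma$-construction, producing a faithfully flat, complete local, $F$-finite extension $R^{\Gamma}\supseteq R$ with geometrically regular (hence generically Gorenstein) fibres; after checking that the weakly FPI hypothesis lifts to $R^{\Gamma}$, the $F$-finite case yields $R^{\Gamma}$ FPI, and Proposition \ref{fpi}(d) then descends the conclusion to $R$. The most delicate point is verifying that weakly FPI is preserved by passage to $R^{\Gamma}$.
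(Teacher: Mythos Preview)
Your overall strategy---reduce to the complete local case, settle the $F$-finite case via Proposition~\ref{prop2}, then pass to a faithfully flat $F$-finite extension and descend---is exactly the shape of the paper's proof. However, there are two genuine problems.

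First, the assertion that ``the underlying Matlis dual calculation works for any complete local ring'' is not correct. Tensor--Hom adjunction gives
\[
F_R(E)^{\ck}=\Hom_R(R^{f}\otimes_R E,E)\cong \Hom_R\bigl(E,\Hom_R(R^{f},E)\bigr),
\]
where the inner $\Hom$ uses the \emph{left} (identity) structure on $R^{f}$, so $\Hom_R(R^{f},E)\cong E$ with the Frobenius-twisted $R$-action. Thus $F_R(E)^{\ck}$ identifies with the $p$-semilinear endomorphisms of $E$, not with $\Hom_R(R^{f},R)$. Passing from one to the other is precisely Lemma~\ref{lem1}, which needs $R^{f}$ finite over $R$. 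When $R$ is not $F$-finite, $R^{f}$ is not finitely presented as a right $R$-module, so even if one had $\Hom_R(R^{f},R)\cong R$, this would not localize or complete to the analogous statement over $\widehat{R_P}$. So this route does not bypass the $F$-finite hypothesis.

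Second, for the general case you invoke the $\Gamma$-construction but leave open the crucial step of lifting weakly FPI to $R^{\Gamma}$. The paper avoids this altogether by a much simpler extension: writing the complete local ring as $R=k[[T_1,\dots,T_n]]/I$ and setting $S=\bar k[[T_1,\dots,T_n]]/IS$. This $S$ is $F$-finite, faithfully flat over $R$ with Gorenstein fibres, and---the key point you are missing---Foxby's theorem gives $E_S(\bar k)\cong E_R(k)\otimes_R S$, so $F_S(E_S)\cong F_R(E_R)\otimes_R S\cong E_R\otimes_R S\cong E_S$ and weakly FPI lifts for free. The $F$-finite case then applies to $S$, and Proposition~\ref{fpi}(d) descends FPI to $R$.
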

\begin{proof} It suffices to prove this in the case $R$ is a local
ring with maximal ideal $m$.  We first prove the theorem in the case
$R$ is $F$-finite.  Suppose $F_R(E)\cong E$ and let $P\in \Spec R$.
By Proposition \ref{prop2}, we have $\Hom_R(R^f,R)\cong R$.
Localizing, we have $\Hom_{R_P}(R_P^f,R_P)\cong R_P$.  Since $R_P$
is $F$-finite, we have again by Proposition \ref{prop1} that
$F_{R_P}(E_{R_P}(k(P))\cong E_{R_P}(k(P))$; i.e., $R_P$ is weakly
FPI.  As $P$ was arbitrary, this shows that $R$ is FPI.

For the general case, by parts (c) and (d) of Proposition \ref{fpi},
we may assume $R$ is complete.  Let $k$ be the residue field of $R$.
By the Cohen Structure Theorem, $R\cong A/I$ where
$A=k[[T_1,\dots,T_n]]$ and $T_1,\dots, T_n$ are indeterminates.  Let
$\ell$ be the algebraic closure $k$, $B=\ell[[T_1,\dots,T_n]]$, and
$S=B/IB$.   Note that as $B$ is faithfully flat over $A$, $S$ is
faithfully flat over $R$, and that $S$ is $F$-finite. Also, $S$ is
weakly FPI, as $E_S(\ell)\cong E_R(k)\otimes_RS$ (\cite[Theorem
1]{F}). Thus, $S$ is FPI by the $F$-finite case. As the fibers of
$S$ over $R$ are Gorenstein, we have that $R$ is FPI by Proposition
\ref{fpi}(d).
\end{proof}

We next show that a weakly FPI ring has no embedded associated primes:

\begin{prop} \label{S_1} Let $R$ be a weakly FPI ring.  Then $R$ satisfies Serre's condition $S_1$.
\end{prop}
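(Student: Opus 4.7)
The plan is to reduce to the complete local setting, upgrade weakly FPI to full FPI via Theorem~\ref{fpi=wfpi}, and then establish a length estimate that rules out embedded associated primes. First, $S_1$ is a local condition and weakly FPI passes from $R$ to $R_\m$ for every maximal ideal $\m$ (since $E_R(R/\m) = E_{R_\m}(R_\m/\m R_\m)$ and Frobenius commutes with localization), so I may assume $(R,\m)$ is local. Then $\hat R$ is weakly FPI by Proposition~\ref{fpi}(c); as $\hat R$ is a homomorphic image of a regular local ring (Cohen's structure theorem), Theorem~\ref{fpi=wfpi} upgrades this to $\hat R$ being FPI, and Proposition~\ref{fpi}(a) then yields that $\hat R_\q$ is FPI (hence weakly FPI) for every $\q \in \Spec \hat R$. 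Faithfully flat descent of $S_1$ along $R \to \hat R$ (taking a prime of $\hat R$ lying minimally over a given prime of $R$ and comparing depths) lets me replace $R$ by $\hat R$ and assume $R$ is complete local with every $R_\q$ weakly FPI.

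The heart of the argument is the following claim: \emph{if $(A,\m_A)$ is a complete local weakly FPI ring with $\dim A \geq 1$, then $\m_A \notin \Ass A$.} Granting the claim, suppose $\q \in \Ass R$ were embedded. Then $\dim R_\q \geq 1$ and $\q R_\q \in \Ass R_\q$, whence $A := \widehat{R_\q}$ is complete local, weakly FPI (Proposition~\ref{fpi}(c)), of dimension $\geq 1$, with $\m_A \in \Ass A$ (as depth zero ascends faithfully flatly)---contradicting the claim. Hence $R$ is $S_1$.

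To prove the claim, assume $\m_A \in \Ass A$, so that $A/\m_A \hookrightarrow A$. Matlis-dualizing over the complete local ring $A$ gives a surjection $E \twoheadrightarrow A/\m_A$, where $E := E_A(A/\m_A)$. By Proposition~\ref{frob-injective} iterated, $F_A^e(E) \cong E$ for every $e \geq 0$; applying the right-exact functor $F_A^e$ produces a surjection $E \twoheadrightarrow F_A^e(A/\m_A) = A/\m_A^{[q]}$ with $q = p^e$. Dualizing back yields an injection $(A/\m_A^{[q]})^\ck \hookrightarrow A$; the image is finitely generated and annihilated by $\m_A^{[q]}$, so it lies in $H^0_{\m_A}(A)$, which has finite length. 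Since Matlis duality preserves the length of Artinian modules, this forces $\ell_A(A/\m_A^{[q]}) \leq \ell_A(H^0_{\m_A}(A))$ for every $e$. But $\m_A^{[q]} \subseteq \m_A^q$, so $\ell_A(A/\m_A^{[q]}) \geq \ell_A(A/\m_A^q)$, and the latter grows polynomially in $q$ of degree $\dim A \geq 1$ by Hilbert--Samuel. For $e$ sufficiently large this exceeds $\ell_A(H^0_{\m_A}(A))$, a contradiction.

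The main obstacle is that weakly FPI does not obviously localize to non-maximal primes; the bridge via Theorem~\ref{fpi=wfpi} (turning weakly FPI into FPI after completion) is precisely what enables control of $R_\q$ for every prime $\q$, after which the Hilbert--Samuel growth of $\ell(A/\m_A^q)$ supplies the contradiction. Verifying that faithfully flat descent of $S_1$ goes through (via the depth formula $\depth \hat R_{\q'} = \depth R_\q + \depth(\hat R_{\q'}/\q\hat R_{\q'})$ at a minimal $\q'$ above $\q\hat R$) is a minor but necessary bookkeeping step.
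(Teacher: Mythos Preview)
Your proof is correct, but it follows a genuinely different route from the paper's.

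The paper works entirely inside the complete local ring $R$ without ever localizing at a non-maximal prime, and in particular does \emph{not} invoke Theorem~\ref{fpi=wfpi}. For an arbitrary $P\in\Ass_R R$ it exploits the canonical module of $R/P$: from an injection $\omega_{R/P}\hookrightarrow R/P$ one dualizes to a surjection $E_{R/P}\twoheadrightarrow H^s_m(R/P)$ (where $s=\dim R/P$), and applying $F_R^e$ shows $\Ann_R F_R^e(E_{R/P})\subseteq \psi^{-1}(P^{[q]}R_P)$. On the other hand, the inclusion $R/P\hookrightarrow R$ dualizes to $E\twoheadrightarrow E_{R/P}$; applying $F_R^e$ (using $F_R(E)\cong E$) and dualizing back lands $F_R^e(E_{R/P})^{\ck}$ inside $H^0_P(R)$, giving a uniform $n$ with $P^n\subseteq \Ann_R F_R^e(E_{R/P})$. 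Combining the two bounds forces $P^nR_P\subseteq P^{[q]}R_P$ for all $q$, hence $\hgt P=0$.

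Your argument instead spends Theorem~\ref{fpi=wfpi} up front to make weakly FPI localize, reducing the question to the special case $P=\m_A$ in a complete local ring $A$; there the contradiction becomes the elementary length estimate $\ell(A/\m_A^{[q]})\le \ell(H^0_{\m_A}(A))$ versus Hilbert--Samuel growth. This is cleaner at the endgame and avoids canonical modules and local cohomology, at the cost of relying on Theorem~\ref{fpi=wfpi} (whose proof passes through the $F$-finite case and a residue-field extension). The paper's approach is more self-contained and illustrates directly how the annihilators of $F_R^e(E_{R/P})$ are pinched between $P^n$ and the contraction of $P^{[q]}R_P$; yours is shorter once the localization bridge is in hand. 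There is no circularity: Theorem~\ref{fpi=wfpi} does not use Proposition~\ref{S_1}.
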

\begin{proof}
Without loss of generality, we may assume that $R$ is local and complete.  Let $P\in \Spec R$ and $s=\dim R/P$.     Since $\omega_{R/P}$ is a rank one torsion-free $R/P$-module, there exists an exact sequence $0\to \omega_{R/P}\to R/P$.   By Matlis duality, we have the exact sequence
$$E_{R/P}\to H^s_m(R/P)\to 0,$$   where $E_{R/P}:=E_{R/P}(R/m)$.  Applying $F^e_R$ to this sequence, we obtain
$$F^e_R(E_{R/P})\to H^s_m(R/P^{[q]})\to 0$$ is exact, where $q=p^e$.   By Proposition \ref{canon}(a), $\Ann_R H^s_m(R/P^{[q]})=U_R(R/P^{[q]})$.
Note that as $\Min_R R/P^{[q]}=\{P\}$, $U_R(R/P^{[q]})=\psi^{-1}(P^{[q]}R_P)$, where $\psi:R\to R_P$ is the natural map.  Hence, for all $q=p^e$ we have
\begin{align*}
\Ann_R F^e_R(E_{R/P})\subseteq \psi^{-1}(P^{[q]}R_P). \tag{\#}
\end{align*}

Now suppose that $P\in \Ass_R R$.  Then there exists an exact sequence $0\to R/P\to R$.  Dualizing, we have $E\to E_{R/P}\to 0$ is exact, where $E=E_R(R/m)$.
Applying $F^e_R$ and using that $F_R(E)\cong E$, we have the exact sequence
$$E\to F^e_R(E_{R/P})\to 0.$$
Dualizing again, we have that
$$0\to F^e_R(E_{R/P})^{\ck}\to R$$
is exact.  Note that as $PE_{R/P}=0$, $P^{[q]}F^e_R(E_{R/P})=P^{[q]} F^e_{R}(E_{R/P})^{\ck}=0$ for all $q=p^e$.  Hence, for all $q$ we have an exact sequence
$$0\to F^e_R(E_{R/P})^{\ck}\to H^0_P(R).$$  Therefore, there exists a positive integer $n$ such that $P^n \subseteq \Ann_R F^e_R(E_{R/P})$ for all $e$.  By (\#),  this implies $P^nR_P\subseteq P^{[q]}R_P$
for all $q=p^e$.  Hence, $P^nR_P=0$ and $\hgt P=0$.
\end{proof}

In general, if $R$ is  weakly FPI and $x$ is a non-zero-divisor on
$R$ then $R/(x)$ need not be weakly  FPI.   (Otherwise, using
Propositions \ref{fpi}(b) and \ref{S_1}, one could prove that every
weakly FPI ring is Gorenstein; but there exist quasi-Gorenstein
rings which are not Gorenstein.) However, this does hold if in
addition we have $\Tor^R_1(R^{f},E)=0$.  We'll use this to give a
criterion for $R$ to be Gorenstein in terms of $\Tor^R_i(R^f, E)$.
First, we prove the following lemma:

\begin{lemma} \label{regular} Let $(R,m)$ be a local ring and $I$  an ideal generated by a regular sequence.  Then
$$R/I\otimes_R E_{R/I^{[p]}}(R/m)\cong E_{R/I}(R/m).$$
\end{lemma}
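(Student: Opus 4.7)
My plan is to first reduce to the case where $R$ is complete, then apply Matlis duality to translate the desired isomorphism into a $\Hom$ computation, and finally exploit a classical colon-ideal identity for regular sequences. For the reduction, I would use that $E_R(R/m)\cong E_{\hat R}(\hat R/\hat m)$, together with the analogous statements over the quotient rings $R/I$ and $R/I^{[p]}$ (whose completions are $\hat R/I\hat R$ and $\hat R/I^{[p]}\hat R$). Since each injective hull appearing on either side is naturally an $\hat R$-module, tensoring with $R/I$ over $R$ agrees with tensoring with $\hat R/I\hat R$ over $\hat R$, so it suffices to prove the lemma with $R$ complete.

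Assuming $R$ is complete, set $E=E_R(R/m)$. For any ideal $J\subseteq R$, one has $E_{R/J}(R/m)\cong\Hom_R(R/J,E)$, the Matlis dual of $R/J$. Applying $\Hom_R(-,E)$ to the two sides of the lemma and using Hom-tensor adjointness together with the Matlis reflexivity of finitely generated modules over a complete local ring, I compute
\begin{align*}
\Hom_R\bigl(R/I\otimes_R\Hom_R(R/I^{[p]},E),\,E\bigr)
&\cong \Hom_R\bigl(R/I,\,\Hom_R(\Hom_R(R/I^{[p]},E),E)\bigr)\\
&\cong \Hom_R(R/I,R/I^{[p]}),
\end{align*}
while the Matlis dual of $E_{R/I}(R/m)$ is simply $R/I$. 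Both modules in the desired isomorphism are Artinian (the left as a quotient of the Artinian submodule $(0:_E I^{[p]})$ of $E$, the right as a submodule of $E$), so Matlis reflexivity reduces the lemma to producing an isomorphism $\Hom_R(R/I,R/I^{[p]})\cong R/I$.

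The remaining step is then a colon-ideal computation. Since $\Hom_R(R/I,R/I^{[p]})=(I^{[p]}:_R I)/I^{[p]}$, it suffices to establish the classical identity
\[
(I^{[p]}:_R I)=I^{[p]}+(x_1^{p-1}x_2^{p-1}\cdots x_n^{p-1}),
\]
together with the fact that $x_1^{p-1}\cdots x_n^{p-1}\bmod I^{[p]}$ is annihilated by exactly $I$. Both assertions follow by induction on $n$ from the base case $(x^p):_R x=(x^{p-1})$ for a nonzerodivisor $x$, together with the observation that powers and permutations of a regular sequence in a Noetherian local ring again form a regular sequence. This displays $(I^{[p]}:_R I)/I^{[p]}$ as a cyclic $R$-module with annihilator $I$, hence isomorphic to $R/I$. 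I expect the colon-ideal identity to be the main technical point; the Matlis duality manipulations, while involving several chained isomorphisms, are routine once the completion reduction is in hand.
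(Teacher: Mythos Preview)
Your proposal is correct and follows essentially the same route as the paper: reduce to the complete case, identify $E_{R/J}(R/m)$ with $\Hom_R(R/J,E)$, and use Matlis duality to reduce to showing $\Hom_R(R/I,R/I^{[p]})\cong R/I$. The paper dispatches this last isomorphism with the phrase ``easily seen to hold as $I$ is generated by a regular sequence,'' whereas you supply the explicit colon-ideal computation $(I^{[p]}:_RI)=I^{[p]}+(x_1^{p-1}\cdots x_n^{p-1})$; this extra detail is correct and is exactly what underlies the paper's one-line claim.
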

\begin{proof}  Without loss of generality, we may assume $R$ is complete. Let $E=E_R(R/m)$. Note that $E_{R/I^{[p]}}(R/m)\cong \Hom_R(R/I^{[p]}, E)$ and $E_{R/I}(R/m)\cong \Hom_R(R/I,E)$.  Taking Matlis duals it suffices to prove that $\Hom_R(R/I, R/I^{[p]})\cong R/I$.  But this is easily seen to hold as $I$ is generated by a regular sequence.\end{proof}

The following result is dual to  Theorem 1.1 of \cite{G}, which
holds in the case the Frobenius map is a finite morphism.

\begin{thm} \label{Tor-condition} Let $(R,m)$ be a local ring and $E=E_R(R/m)$.   The following conditions are equivalent:
\begin{enumerate}
\item $F_R(E)\cong E$ and $\Tor_i^R(R^{f},E)=0$ for all $i=1,\dots, \depth R$;
\item $R$ is Gorenstein.
\end{enumerate}
\end{thm}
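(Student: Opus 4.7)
The implication (b) $\Rightarrow$ (a) is immediate: a Gorenstein ring is quasi-Gorenstein, so Propositions \ref{quasi} and \ref{frob-injective} give $F_R(E)\cong E$, and since $R$ is Cohen-Macaulay with $E\cong H^d_m(R)$ where $d=\depth R$, Lemma \ref{tor} gives $\Tor_i^R(R^f,E)=0$ for all $i\ge 1$.

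For (a) $\Rightarrow$ (b), my plan is to use the $\Tor$-vanishing in (b) to show that the weakly FPI property is inherited by the quotient of $R$ by a maximal regular sequence, and then invoke the zero-dimensional case of Proposition \ref{fpi}(b). Choose a maximal regular sequence $y_1,\dots,y_d\in m$ (where $d=\depth R$), set $J_k=(y_1,\dots,y_k)$ for $0\le k\le d$, and prove by induction on $k$ the pair of assertions (i) $\Tor_j^R(R^f, E_{R/J_k}(R/m))=0$ for $1\le j\le d-k$ and (ii) $F_R(E_{R/J_k}(R/m))\cong E_{R/J_k^{[p]}}(R/m)$. The base case $k=0$ is hypothesis (b); for the inductive step, dualize the short exact sequence $0\to R/J_{k-1}\xrightarrow{y_k}R/J_{k-1}\to R/J_k\to 0$ via $\Hom_R(-,E)$ to get a short exact sequence of the injective hulls, apply $F_R$ (which replaces multiplication by $y_k$ with multiplication by $y_k^p$), and read off the resulting long exact $\Tor$ sequence: part (i) for $J_k$ follows from (i) for $J_{k-1}$ in the shifted range, while part (ii) for $J_k$ uses the vanishing $\Tor_1^R(R^f, E_{R/J_{k-1}}(R/m))=0$ (which holds by (i) for $J_{k-1}$ whenever $k\le d$) to obtain left-exactness, identifying $F_R(E_{R/J_k}(R/m))$ with $(0:_{F_R(E_{R/J_{k-1}}(R/m))}\,y_k^p)$, which by (ii) for $J_{k-1}$ equals $E_{R/J_k^{[p]}}(R/m)$.

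Taking $k=d$ and setting $\bar R=R/J_d$, part (ii) combined with Lemma \ref{regular} gives $F_{\bar R}(E_{\bar R}(R/m))\cong \bar R\otimes_R E_{R/J_d^{[p]}}(R/m)\cong E_{\bar R}(R/m)$, so $\bar R$ is weakly FPI. Since $\depth\bar R=0$, Proposition \ref{S_1} forces $m\in\Ass\bar R=\Min\bar R$, hence $\dim\bar R=0$; the argument of Proposition \ref{fpi}(b) then shows $\bar R$ is Gorenstein. Since $y_1,\dots,y_d$ is a regular sequence on $R$ of length $\depth R$, we get $\dim R=\depth R$, so $R$ is Cohen-Macaulay, and then $\bar R$ Gorenstein forces $R$ itself to be Gorenstein. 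The main obstacle will be the degree bookkeeping in the induction: the vanishing range in (i) shrinks by one at each step (from $d-(k-1)$ to $d-k$), and it is a pleasant coincidence that the vanishing $\Tor_1^R(R^f, E_{R/J_{k-1}}(R/m))=0$ needed to derive (ii) at level $k$ holds precisely when $k\le d$, so the induction closes exactly where it needs to.
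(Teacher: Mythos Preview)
Your proof is correct and follows essentially the same route as the paper: reduce to showing that $\bar R=R/(\text{maximal regular sequence})$ is weakly FPI, then invoke Propositions \ref{S_1} and \ref{fpi}(b) to conclude $\bar R$ is zero-dimensional Gorenstein, hence $R$ is Gorenstein. The only difference is in how the identification $F_R(E_{R/J_d}(R/m))\cong E_{R/J_d^{[p]}}(R/m)$ is obtained: the paper dualizes the full Koszul complex $K(\bold x)$ in one shot (the Tor-vanishing for $1\le i\le d$ being exactly what is needed to preserve exactness of the dualized Koszul complex under $F_R$), whereas you unwind this inductively one element at a time via the short exact sequences $0\to E_{R/J_k}\to E_{R/J_{k-1}}\xrightarrow{y_k}E_{R/J_{k-1}}\to 0$. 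Since the Koszul complex on $\bold x$ is the iterated tensor product of the length-one complexes $K(y_k)$, your induction is precisely the step-by-step version of the paper's argument; the bookkeeping you flag (the vanishing range in (i) shrinking by one at each step, closing exactly at $k=d$) is the inductive shadow of the fact that the dual Koszul complex has length $d$.
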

\begin{proof} Condition (2) implies (1) by Lemma \ref{tor} and Proposition \ref{quasi} (note $E\cong H^d_m(R)$).    Conversely,  suppose condition (1) holds.  Let  $\bold x= x_1,\dots,x_r\in m$ be a maximal regular sequence on $R$ and $K(\bold x)$ the Koszul complex with respect to $\bold x$.  Then $K(\bold x)\xrightarrow{\epsilon} R/(\bold x)\to 0$ is exact, where $\epsilon$ is the augmentation map.   Dualizing, we have $0\to E_{R/(\bold x)}(R/m) \to K(\bold x)^{\ck}$ is exact.  Since $K(\bold x)_j^{\ck}\cong E^{\binom{r}{j}}$ for all $j$ and $\Tor^R_i(R^{f}, E)=0$ for $1\le i\le r$, we obtain that $0\to F_R(E_{R/(\bold x)}(R/m))\to F_R(K(\bold x)^{\ck})$ is exact.  In particular, since $F_R(E)\cong E$, we have that
$$0\to F_R(E_{R/(\bold x)}(R/m))\to E \xrightarrow{[x_1^p \dots x_r^p]} E^r$$
is exact.   Hence,
$$F_R(E_{R/(\bold x)}(R/m))\cong \Hom_R(R/(\bold x)^{[p]}, E)\cong E_{R/(\bold x)^{[p]}}(R/m).$$
Using Lemma \ref{regular}, we have
\begin{align*}
F_{R/(\bold x)}(E_{R/(\bold x)}(R/m))&\cong R/(\bold x)\otimes_R F_R(E_{R/(\bold x)}(R/m))\\
&\cong R/(\bold x)\otimes_R E_{R/(\bold x)^{[p]}}(R/m)\\
&\cong E_{R/(\bold x)}(R/m).
\end{align*}
This says that $R/(\bold x)$ is weakly FPI.  Since $\depth R/(\bold x)=0$, we must have $\dim R/(\bold x)=0$ by Proposition \ref{S_1}.  But then $R/(\bold x)$ is Gorenstein by Proposition \ref{fpi}(b).  Hence, $R$ is Gorenstein.
\end{proof}

\begin{section}{One-dimensional FPI rings}
\end{section}

We now turn our attention to the one-dimensional case.    If $R$ is
a local ring possessing an ideal which is also a canonical module of
$R$, this ideal is referred to as a  {\it canonical ideal} of $R$.
If $(R,m)$ is a one-dimensional Cohen-Macaulay local ring, then $R$
has a canonical ideal (necessarily $m$-primary) if and only if $\hat
R$ is generically Gorenstein (\cite[Satz 6.21]{H2}). The following
can viewed as a generalization of Lemma 2.6 of \cite{G}, which holds
in the case Frobenius map is finite:

\begin{thm} \label{one-dim} Let $(R,m)$ be a one-dimensional local ring.  The following conditions are equivalent:
\begin{enumerate}[(a)]
\item $R$ is weakly FPI;
\item $R$ is FPI;
\item $R$ is Cohen-Macaulay and has a canonical ideal $\omega_R$ such that $\omega_R\cong \omega_R^{[p]}$.
\end{enumerate}
\end{thm}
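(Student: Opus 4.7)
The implication (b) $\Rightarrow$ (a) is immediate; I will prove (a) $\Rightarrow$ (c) and (c) $\Rightarrow$ (b).

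For (a) $\Rightarrow$ (c), first reduce to the complete case via Proposition~\ref{fpi}(c). Then $R$ is FPI by Theorem~\ref{fpi=wfpi}, satisfies $S_1$ (hence is Cohen--Macaulay, as $\dim R = 1$) by Proposition~\ref{S_1}, and is generically Gorenstein by Proposition~\ref{fpi}(b); so $R$ admits an $m$-primary canonical ideal $\omega_R$, and local duality identifies $H^1_m(\omega_R) \cong E$. Applying $F_R$ to the inclusion $\omega_R \hookrightarrow R$ yields the short exact sequence
$$0 \to \Tor_1^R(R^f, R/\omega_R) \to F_R(\omega_R) \to \omega_R^{[p]} \to 0,$$
whose first term has finite length (since $R/\omega_R$ does). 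By Proposition~\ref{frob}(e), the hypothesis $F_R(E) \cong E$ becomes $H^1_m(F_R(\omega_R)) \cong E$, and the finite-length $\Tor$ drops out of the local-cohomology long exact sequence, leaving $H^1_m(\omega_R^{[p]}) \cong E$. Proposition~\ref{canon}(f) then yields $\Hom_R(\omega_R^{[p]}, \omega_R) \cong E^{\ck} \cong R$. Since $\omega_R^{[p]}$ is an $m$-primary ideal in a one-dimensional Cohen--Macaulay ring it is maximal Cohen--Macaulay, and the canonical-module duality on MCM modules gives
$$\omega_R^{[p]} \;\cong\; \Hom_R\bigl(\Hom_R(\omega_R^{[p]}, \omega_R),\,\omega_R\bigr) \;\cong\; \Hom_R(R,\omega_R) \;\cong\; \omega_R.$$

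To handle a general (non-complete) $R$ I apply the complete-case conclusion to $\hat R$ and descend. Using $\omega_R\hat R = \omega_{\hat R}$ and $\omega_R^{[p]}\hat R = \omega_{\hat R}^{[p]}$, the finitely generated $R$-module $M := \Hom_R(\omega_R^{[p]}, \omega_R)$ satisfies $M \otimes_R \hat R \cong \hat R$, so it is cyclic (one generator) with zero annihilator, forcing $M \cong R$; MCM duality over $R$ then recovers $\omega_R^{[p]} \cong \omega_R$ exactly as in the complete case.

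For (c) $\Rightarrow$ (b), I verify that $F_R(E_R(R/P))$ is injective for every $P \in \Spec R$. If $P$ is minimal, then $(\omega_R)_P = R_P$ (as $\omega_R$ is $m$-primary), so $R_P$ is Gorenstein; Propositions~\ref{quasi} and~\ref{frob}(b) combine to give $F_R(E_R(R/P)) \cong E_R(R/P)$. If $P = m$, tensor $\omega_R \cong \omega_R^{[p]}$ with $\hat R$ to see that $\hat R$ also satisfies (c), and run the complete-case calculation in reverse: $\omega_{\hat R}^{[p]} \cong \omega_{\hat R}$ yields $H^1_m(F_{\hat R}(\omega_{\hat R})) \cong H^1_m(\omega_{\hat R}) \cong E$, and therefore $F_R(E) = F_{\hat R}(E) \cong E$.

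The main obstacle is the central translation in (a) $\Rightarrow$ (c) from the abstract condition $F_R(E) \cong E$ to the explicit ideal isomorphism $\omega_R \cong \omega_R^{[p]}$. It requires three layers of duality to cooperate: local cohomology (which absorbs the finite-length $\Tor$), local duality with the canonical module (to pass to a Hom), and MCM duality (to invert that Hom). The descent from $\hat R$ to $R$ is then routine, achieved by observing that $\Hom_R(\omega_R^{[p]}, \omega_R)$ is cyclic with zero annihilator.
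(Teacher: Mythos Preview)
Your proof is correct. Both arguments share the same skeleton for (c)~$\Rightarrow$~(b) --- pass through $H^1_m$ and use that the kernel of $F_R(\omega_R)\to\omega_R^{[p]}$ has finite length --- but diverge at the heart of (a)~$\Rightarrow$~(c). The paper applies $F_R$ to the local-cohomology sequence of $0\to\omega_R\to R\to R/\omega_R\to 0$ (using Lemma~\ref{tor} to kill the relevant $\Tor_1$), obtains an embedding $R/\omega_R^{[p]}\hookrightarrow E$, reads off that $R/\omega_R^{[p]}$ has one-dimensional socle and is therefore zero-dimensional Gorenstein, and then identifies the kernel of the resulting surjection $R\to R/\omega_R^{[p]}$ with $\omega_R$. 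You instead establish $H^1_m(\omega_R^{[p]})\cong E$, invoke local duality (Proposition~\ref{canon}(f)) to get $\Hom_R(\omega_R^{[p]},\omega_R)\cong R$, and finish with MCM biduality. Your route is cleaner conceptually and sidesteps the socle computation; the paper's is more elementary, needing no appeal to the biduality theorem for MCM modules. For the descent to incomplete $R$, the paper simply observes that $\omega_R^{[p]}$ is a canonical ideal of $R$ iff its extension is one for $\hat R$ (canonical modules being unique up to isomorphism), which is a bit quicker than your $\Hom$-and-annihilator argument, though both are routine.
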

\begin{proof}
Since (b) trivially implies (a), it suffices to prove (a) implies (c) and (c) implies (b).

We first prove (a) implies (c):  As $R$ is weakly FPI, $R$ is
Cohen-Macaulay by  Proposition \ref{S_1}.   Furthemore, $\hat R$ is
weakly FPI and thus FPI by Theorem \ref{fpi=wfpi}.  Thus, $\hat R$
is generically Gorenstein, which implies $R$ possesses a canonical
ideal $\omega_R$.  To show $\omega_R\cong \omega_R^{[p]}$, it
suffices to show that $\omega_R^{[p]}$ is a canonical ideal of $R$.
Since $\omega_{R}^{[p]}$ is a canonical ideal for $R$ if and only if
$\omega_R^{[p]}\otimes_R\hat R\cong (\omega_R\hat R)^{[p]}$ is a
canonical ideal for $\hat R$, we may assume without loss of
generality that $R$ is complete.  Since $R$ is Cohen-Macaulay,
$H^1_m(\omega_R)\cong E$, where $E=E_R(R/m)$. Applying local
cohomology to the exact sequence
$$0\to \omega_R \to R\to R/\omega_R\to 0$$
we obtain
$$0\to R/\omega_R\to E\to H^1_m(R)\to 0$$
is exact. Applying $F_R$ and using that $\Tor_1^R(R^{f},H^1_m(R))=0$
(Lemma \ref{tor}), we have the exact sequence
$$0\to R/\omega_R^{[p]}\to E\to H^1_m(R)\to 0.$$
Dualizing, we have
$$0\to \omega_R \to R \to \Hom_R(R/\omega_R^{[p]}, E)\to 0$$
is exact.
But as $0\to \Hom_R(R/m,R/\omega_R^{[p]})\to \Hom_R(R/m,E)$
is exact, the socle of $R/\omega_R^{[p]}$ is one-dimensional and hence $R/\omega_R^{[p]}$ is Gorenstein.  Thus, $\Hom_R(R/\omega_R^{[p]},E)\cong R/\omega_R^{[p]}$ and we obtain an exact sequence
$$0\to \omega_R \to R \to R/\omega_R^{[p]}\to 0.$$  This implies that $\omega_R\cong \omega_R^{[p]}$.

Next, we prove (c) implies (b):   Since $R$ is Cohen-Macaulay and
possesses a canonical ideal, $R$ is the homomorphic image of a
Gorenstein ring (cf. \cite[Theorem 3.3.6]{BH}).  Hence, by Theorem
\ref{fpi=wfpi}, it suffices to prove that $R$ is weakly FPI.    Let
$\pi: F_R(\omega_R)\to \omega_R^{[p]}$ be the natural surjection
given by $\pi(r\otimes u)=ru^p$ and let $C=\ker \pi$.  Since $R_P$
is Gorenstein for all primes $P\neq m$, $\dim C=0$. Consequently,
$H^1_m(C)=0$ and  $H^1_m(F_R(\omega_R))\cong H^1_m(\omega_R^{[p]})$.
Since $E\cong H^1_m(\omega_R)$ and $\omega_R\cong \omega_R^{[p]}$,
we have
$$F_R(E)\cong F_R (H^1_m(\omega_R))\cong H^1_m(F_R(\omega_R))\cong H^1_m(\omega_R^{[p]})\cong H^1_m(\omega_R)\cong E.$$
Hence, $R$ is weakly FPI.
\end{proof}

We remark that there exists one-dimensional local FPI rings which are not Gorenstein.  In fact, the next result shows that every one-dimensional $F$-pure ring is FPI.    Recall that a homomorphism $A\to B$ of commutative rings is called {\it pure} if the map $M\to B\otimes_AM$ is injective for every $A$-module $M$.  A ring $R$ of prime characteristic  is called {\it F-pure} if the Frobenius map $f:R\to R$ is pure.

\begin{prop} \label{Fpure} Let $(R,m)$ be a one-dimensional $F$-pure ring.  Then $R$ is FPI.
\end{prop}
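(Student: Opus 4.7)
I will verify condition~(c) of Theorem~\ref{one-dim}: that $R$ is Cohen--Macaulay and has a canonical ideal $\omega_R$ satisfying $\omega_R\cong \omega_R^{[p]}$.

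Since $R$ is $F$-pure, the Frobenius $f\colon R\to R$ is in particular injective, so $R$ is reduced. In dimension one this yields $\Ass R=\Min R$, whence $R$ is Cohen--Macaulay; moreover $R_P$ is a field at each $P\in \Min R$, so $R$ is generically Gorenstein. By Proposition~\ref{fpi}(c) it suffices to show that $\hat R$ is weakly FPI, and the $F$-pure hypothesis furnishes a pure injection $E\hookrightarrow F_R(E)\cong F_{\hat R}(E)$. Passing to the complete case, $\hat R$ is itself one-dimensional and Cohen--Macaulay; invoking that $F$-pure local rings are approximately Gorenstein (Hochster), $\hat R$ is generically Gorenstein and hence admits a canonical ideal $\omega_R$.

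The central claim is that $\omega_R\cong \omega_R^{[p]}$. The natural surjection $F_R(\omega_R)\twoheadrightarrow \omega_R^{[p]}$ has zero-dimensional kernel (being an isomorphism at each minimal prime, where the localization is Gorenstein). Combining Proposition~\ref{frob}(e) with the resulting long exact sequence in local cohomology gives
\[
F_R(E)\;\cong\; H^1_m(F_R(\omega_R))\;\cong\; H^1_m(\omega_R^{[p]}).
\]
The pure injection $E\hookrightarrow F_R(E)$ supplied by $F$-purity therefore reads $H^1_m(\omega_R)\hookrightarrow H^1_m(\omega_R^{[p]})$. Matlis-dualizing and applying local duality $H^1_m(M)^\vee\cong \Hom_R(M,\omega_R)$ (valid for finitely generated $M$ in the complete one-dimensional Cohen--Macaulay setting), this yields a surjection
\[
\Hom_R(\omega_R^{[p]},\omega_R)\;\twoheadrightarrow\;\Hom_R(\omega_R,\omega_R)\;\cong\; R,
\]
where the last isomorphism uses that $R$ is $S_2$ (Proposition~\ref{canon}(e)). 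Both modules are rank-one maximal Cohen--Macaulay, so the kernel of the surjection has rank zero; being a submodule of a torsion-free depth-one module it must vanish. Hence $\Hom_R(\omega_R^{[p]},\omega_R)\cong R$, and a final canonical-duality applied to this isomorphism against $\omega_R$ returns $\omega_R^{[p]}\cong \omega_R$, establishing condition~(c) of Theorem~\ref{one-dim}.

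The main obstacle is justifying the passage to $\hat R$, since it is not automatic that $F$-purity transfers to the completion in full Noetherian generality; the existence of the canonical ideal for $\hat R$ rests on approximate Gorensteinness of $F$-pure local rings. Once this is secured, the argument closes cleanly via the local-duality/canonical-duality computation sketched above.
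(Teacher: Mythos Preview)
Your overall strategy---verify condition~(c) of Theorem~\ref{one-dim} by showing $\omega_R\cong\omega_R^{[p]}$ via local/canonical duality---is different from the paper's and is an attractive idea, but there is a genuine gap at the key step.

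The injection $E\hookrightarrow F_R(E)$ furnished by $F$-purity is \emph{not} $R$-linear: it is the map $m\mapsto 1\otimes m$, and in $R^f\otimes_R E$ one has $1\otimes rm=r^p\otimes m=r^p\cdot(1\otimes m)$, so the map is Frobenius-semilinear. Consequently you cannot apply the Matlis dual $\Hom_R(-,E)$ to it and obtain an $R$-linear surjection $\Hom_R(\omega_R^{[p]},\omega_R)\twoheadrightarrow \Hom_R(\omega_R,\omega_R)\cong R$. The only natural $R$-linear map in sight comes from the inclusion $\omega_R^{[p]}\subseteq\omega_R$, which (after applying $H^1_m$) yields a surjection $F_R(E)\cong H^1_m(\omega_R^{[p]})\twoheadrightarrow H^1_m(\omega_R)\cong E$; dualizing this gives only an \emph{injection} $R\hookrightarrow\Hom_R(\omega_R^{[p]},\omega_R)$ with finite-length cokernel $(\omega_R/\omega_R^{[p]})^{\ck}$, from which your rank/torsion-free argument cannot conclude. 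Thus $F$-purity, as you invoke it, does not produce the surjection you need, and the argument does not close.

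For comparison, the paper's proof takes an entirely different route: it reduces (via faithfully flat base change) to the case where $R$ is complete with algebraically closed residue field, and then invokes the Goto--Watanabe structure theorem for one-dimensional $F$-pure rings, which identifies $R$ explicitly as $k[[T_1,\dots,T_n]]/(T_iT_j:i<j)$. In that ring one writes down the canonical ideal $\omega_R=(T_2-T_1,\dots,T_n-T_1)$ and checks by hand that $\omega_R^{[p]}=(T_1+\cdots+T_n)^{p-1}\omega_R\cong\omega_R$. So the paper's argument is concrete and classificatory rather than homological; your duality approach, if it could be repaired, would be more conceptual, but the semilinearity obstacle is real.
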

\begin{proof}  We may assume $R$ is local.  By Theorem \ref{one-dim},
it suffices to show that $R$ is weakly FPI.  By Proposition
\ref{fpi}(c) and \cite[Corollary 6.13]{HR}, we may assume $R$ is
complete.   Let $k$ be the residue field of $R$.   By the Cohen
Structure Theorem, $R\cong A/I$ where $A=k[[T_1,\dots,T_n]]$ and
$T_1,\dots, T_n$ are indeterminates.  Let $\ell$ be the algebraic closure
$k$, $B=\ell[[T_1,\dots,T_n]]$, and $S=B/IB$.   Note that as $B$ is
faithfully flat over $A$, $S$ is faithfully flat over $R$.
Furthermore, $S$ is $F$-pure since $R$ is (e.g., \cite[Theorem
1.12]{Fe}).  Finally, by \cite[Theorem 1]{F}, $E_S(\ell)\cong
E_R(k)\otimes_RS$.  Hence, $S$ is weakly FPI if and only if $R$ is
weakly FPI. Thus, resetting notation, we may assume $R$ is complete
and its residue field $k$ is algebraically closed. By \cite[Theorem
1.1]{GW}, $R\cong k[[T_1,\dots,T_n]]/I$ where $I=(\{T_iT_j\mid 1\le
i<j\le n\})$. It is easily checked that
$\omega_R=(T_2-T_1,\dots,T_n-T_1)R$ is a canonical ideal of $R$ and
that $\omega_R^{[p]}=(T_1+\cdots + T_n)^{p-1}\omega_R$ (cf.
\cite[Example 2.8]{G}).  Hence, $\omega_R^{[p]}\cong \omega_R$ and
$R$ is weakly FPI by Theorem \ref{one-dim}.
\end{proof}

As a specific example of a one-dimensional non-Gorenstein FPI ring,
let $k$ be any field of characteristic $p$ and
$R=k[[x,y,z]]/(xy,xz,yz)$. Then $R$ is a one-dimensional local ring
which is $F$-pure  (and hence FPI) but not Gorenstein. Notice in
this example that $R$ has three associated primes. Regarding this we
note the following, which is a consequence of Corollary 1.3 of
\cite{G}:

\begin{cor} \label{Goto-cor}  Let $R$ be a one-dimensional complete local ring with algebraically closed residue field and suppose $R$ has at most two associated primes.  The following are equivalent:
\begin{enumerate}[(a)]
\item  $R$ is weakly FPI;
\item $R$ is Gorenstein.
\end{enumerate}
\end{cor}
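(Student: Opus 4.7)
The plan is to prove the equivalence by treating the two directions separately, leaning on the one-dimensional characterization in Theorem \ref{one-dim} and on Corollary 1.3 of \cite{G}.

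For (b) $\Rightarrow$ (a): any Gorenstein local ring is quasi-Gorenstein, so Proposition \ref{quasi} shows it is FPI, and in particular weakly FPI. No use is made of the completeness, residue field, or associated prime hypotheses in this direction.

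For (a) $\Rightarrow$ (b): the first step is to apply Theorem \ref{one-dim} to conclude that $R$ is Cohen-Macaulay and possesses a canonical ideal $\omega_R$ satisfying $\omega_R \cong \omega_R^{[p]}$. Because $R$ is Cohen-Macaulay and one-dimensional, $\Ass_R R$ coincides with $\Min_R R$, so the hypothesis on associated primes carries over to minimal primes: $R$ has at most two minimal primes. At this point the setting matches the hypotheses of \cite[Corollary 1.3]{G} exactly: a one-dimensional complete Cohen-Macaulay local ring with algebraically closed residue field, at most two minimal primes, and a canonical ideal isomorphic to its Frobenius power. Goto's corollary then forces $R$ to be Gorenstein.

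The main obstacle, and indeed the only nontrivial input, is Goto's structural classification. Once Theorem \ref{one-dim} hands us the isomorphism $\omega_R \cong \omega_R^{[p]}$, the content of the argument lies entirely in Goto's analysis of one-dimensional complete local rings with algebraically closed residue field and few minimal primes, which rules out any non-Gorenstein canonical ideal satisfying this Frobenius identity. I would not attempt to reprove that classification here; citing \cite[Corollary 1.3]{G} is both unavoidable and sufficient, and fits the paper's overall theme (already stressed after Theorem \ref{one-dim} and in the introduction) that the present results are dual to Goto's in the $F$-finite setting.
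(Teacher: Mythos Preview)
Your argument is correct, but the paper takes a somewhat different (and shorter) route. Rather than invoking Theorem \ref{one-dim} to extract the Cohen--Macaulay property and the isomorphism $\omega_R\cong\omega_R^{[p]}$, the paper simply observes that the hypotheses (complete, algebraically closed residue field) force $R$ to be $F$-finite, then applies Proposition \ref{prop2} to translate ``$R$ is weakly FPI'' into ``$\Hom_R(R^{f},R)\cong R$''. This is precisely the condition appearing in Goto's \cite[Corollary 1.3]{G}, so the citation lands directly. Your route passes through the heavier Theorem \ref{one-dim} and then cites Goto under the canonical-ideal formulation; that formulation is in Goto's paper as well (it is essentially his Lemma 2.6, as noted before Theorem \ref{one-dim}), so your invocation is legitimate, but your assertion that the hypotheses of \cite[Corollary 1.3]{G} are stated ``exactly'' in terms of $\omega_R\cong\omega_R^{[p]}$ is not quite accurate---Goto's Corollary 1.3 is phrased in terms of $\Hom_R(R^{f},R)$. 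The paper's approach buys brevity and a direct match with Goto's statement; your approach buys a more self-contained picture (Cohen--Macaulayness and the canonical ideal come out explicitly) at the cost of a longer detour and a slightly indirect citation.
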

\begin{proof}  Notice that the hypotheses imply that $R$ is $F$-finite.  Hence, (a) is equivalent to the condition that $\Hom_R(R^{f},R)\cong R$ by Corollary \ref{prop2}.
The result now follows from \cite[Corollary 1.3]{G}.
\end{proof}

\end{document}